\documentclass[12pt]{article}

\usepackage{amsmath,amssymb,amsthm}
\usepackage[margin=1in]{geometry}
\usepackage{xcolor}
\usepackage{comment}

\usepackage{tikz}
\usetikzlibrary{arrows.meta,positioning,calc}

\usepackage[
  colorlinks=true,
  citecolor=red,
  linkcolor=black
]{hyperref}

\newtheorem{theorem}{Theorem}
\newtheorem{definition}{Definition}
\newtheorem{remark}{Remark}
\newtheorem{lemma}{Lemma}
\newtheorem{proposition}{Proposition}
\newtheorem{corollary}{Corollary}
\newtheorem{example}{Example}

\begin{document}

\title{The Fourier Ratio: A Unifying Measure of Complexity for Recovery, Localization, and Learning}

\author{W. Burstein, A. Iosevich, and H. Nathan}
\date{\today}
\maketitle

\begin{abstract}
We introduce a generalized Fourier ratio, the \(\ell^1/\ell^2\) norm ratio of coefficients in an \emph{arbitrary} orthonormal system, as a single, basis-invariant measure of \emph{effective dimension} that governs fundamental limits across signal recovery, localization, and learning. First, we prove that functions with small Fourier ratio can be stably recovered from random missing samples via \(\ell^1\) minimization, extending and clarifying compressed sensing guarantees for general bounded orthonormal systems. Second, we establish a sharp \emph{localization obstruction}: any attempt to localize recovery to subslices of a product space necessarily inflates the Fourier ratio by a factor scaling with the square root of the slice count, demonstrating that global complexity cannot be distributed locally. Finally, we show that the same parameter controls key complexity-theoretic measures: it provides explicit upper bounds on Kolmogorov rate-distortion description length and on the statistical query (SQ) dimension of the associated function class. These results unify analytic, algorithmic, and learning-theoretic constraints under a single complexity parameter, revealing the Fourier ratio as a fundamental invariant in information-theoretic signal processing.
\end{abstract}

\tableofcontents

\section{Background and motivation}
\label{sec:background}

Let $D$ be a finite set with $|D|=M$, and identify ${\mathbb C}^D$ with the space of
functions $f:D\to{\mathbb C}$. We use the $\ell^2$ inner product
$$
\langle f,g\rangle = \sum_{x\in D} f(x)\overline{g(x)}.
$$
With this convention, $\|f\|_2^2=\sum_{x\in D}|f(x)|^2$.

The classical uncertainty principle of Donoho and Stark gives recovery guarantees when a signal is sparse in the ``time'' domain and incomplete information in the Fourier domain is received, see \cite{DS89}. A more recent line of work replaces hard support size by magnitude-sensitive quantities. Chief among these is the Fourier ratio
$$
FR(f)=\frac{\|c(f)\|_1}{\|c(f)\|_2},
$$
where $c(f)$ is an appropriate coefficient vector (Fourier coefficients, Gabor coefficients, or coefficients in another orthonormal expansion). This ratio behaves like the square root of an effective support size and governs approximation and recovery phenomena.

A conceptual point that motivates this paper is that most classical compressed sensing
results are organized around sparsity or approximate sparsity parameters, whereas many
natural signals are not sparse even approximately, but nevertheless exhibit strong
compressibility in a global sense. The Fourier ratio provides a single quantitative
parameter that captures this global compressibility and interfaces directly with $\ell^1$
recovery theory. One of the goals of this paper is to make precise

\noindent\textbf{Main contributions.} We isolate the Fourier ratio as a single, basis-compatible notion of effective dimension that simultaneously controls positive recovery guarantees and negative localization phenomena, and that also yields explicit complexity-theoretic consequences. More precisely: (i) we restate the global Fourier-ratio recovery theorem from \cite{FRdiscrete} in a standard bounded-orthonormal-system setting (Theorem~\ref{thm:FR-ONB}) so it applies cleanly to structured bases beyond the Fourier basis; (ii) we specialize this basis formulation to orthonormal Gabor bases on ${\mathbb Z}_N\times{\mathbb Z}_T$ (Corollary~\ref{cor:gabor-global}); (iii) we prove a sharp obstruction showing that slicing/localization necessarily inflates Fourier-ratio complexity by a factor comparable to the square root of the number of slices (Theorem~\ref{thm:obstruction} and Proposition~\ref{prop:localization-abelian}); and (iv) we show that the same parameter yields explicit upper bounds for algorithmic rate distortion description length and for the statistical query complexity of the associated function class (Theorem~\ref{thm:kolmogorov-FR-abelian} and Theorem~\ref{thm:gaborstatdim}), through explicit examples and theorems, showing that Fourier-ratio-based recovery can apply in regimes where sparsity-based formulations are awkward or misleading.

In parallel, there is a different and very successful use of Gabor ideas in discrete
signal recovery that proceeds by localizing the Fourier transform row by row and then
studying frequency erasures within each row. A representative result is Theorem 1.8 from \cite{GaborFrames}, which we record explicitly below since it will be referenced
in the narrative.

The main point of the present paper is that there are two logically distinct ways in which
Gabor structures enter signal recovery. The first is global and basis-driven: one expands a
signal in an orthonormal Gabor basis, measures complexity by the Fourier ratio of the full
Gabor coefficient vector, and then applies a global $\ell^1$ recovery theorem. The second is
local and architecture-driven: one transmits localized Fourier data row by row and studies
frequency erasures inside each local block. We record one theorem of each type and explain
how they fit together. The final section proves two basic results that clarify why the global
theorem cannot in general be applied after localization.

It is also useful to emphasize what this paper is not claiming. The global theorem that drives
our discussion is a standard compressive sensing mechanism, and the basis formulation we record
does not introduce a new method. The point is that, when the complexity hypothesis is phrased in
terms of a Fourier ratio, the basis formulation gives a clean and immediately usable statement for
orthonormal Gabor expansions that does not seem to be written explicitly in the time-frequency
recovery literature, which often emphasizes different loss models.

A central conceptual contribution of this paper is the identification of Fourier ratio as a
complexity parameter that simultaneously governs positive recovery results and negative
localization phenomena. While a bounded Fourier ratio guarantees stable global recovery
from random missing data, we show that this control necessarily degrades under localization,
even on products of finite abelian groups. This obstruction demonstrates that Fourier-ratio-based
recovery is intrinsically global and cannot, in general, be reduced to slice-by-slice or
row-wise arguments. We further show that the same parameter controls algorithmic description
length and statistical query dimension, revealing a common notion of effective dimension
across analytic, algorithmic, and learning-theoretic settings.

This paper is not about improving recovery rates. It is about identifying a single quantitative invariant that simultaneously controls recovery, obstructs localization, and bounds algorithmic and learning-theoretic complexity.

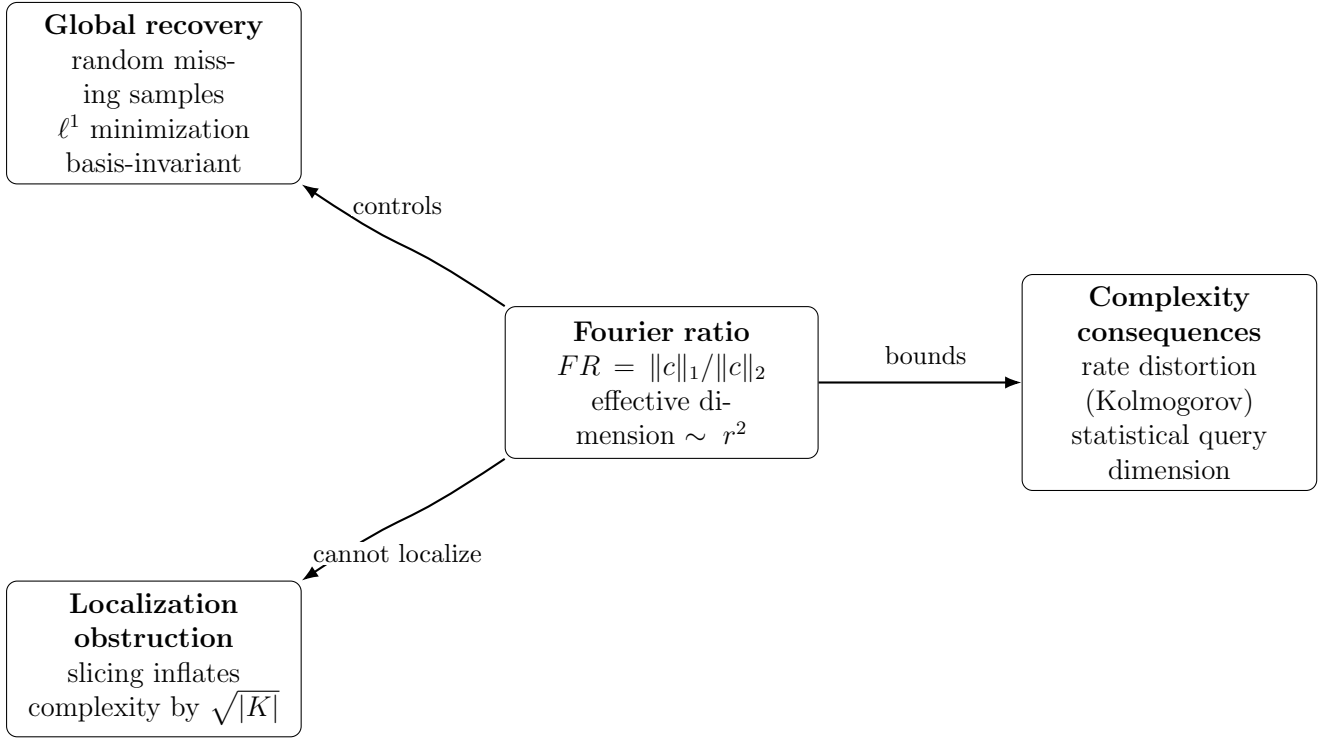
\begin{figure}[t]
\centering
\begin{tikzpicture}[
  scale=0.9,
  transform shape,
  >={Latex[length=2.2mm]},
  node distance=18mm and 30mm,
  box/.style={draw, rounded corners, align=center, inner sep=6pt, text width=42mm},
  small/.style={draw, rounded corners, align=center, inner sep=5pt, text width=40mm},
  lab/.style={align=center, inner sep=1pt, fill=white}
]

% Central node
\node[box] (fr)
{\textbf{Fourier ratio}\\
$FR=\|c\|_1/\|c\|_2$\\
effective dimension $\sim r^2$};

% Three consequences
\node[small, above left=of fr] (rec)
{\textbf{Global recovery}\\
random missing samples\\
$\ell^1$ minimization\\
basis-invariant};

\node[small, below left=of fr] (loc)
{\textbf{Localization obstruction}\\
slicing inflates\\
complexity by $\sqrt{|K|}$};

\node[small, right=of fr] (cmp)
{\textbf{Complexity consequences}\\
rate distortion (Kolmogorov)\\
statistical query dimension};

% Arrows
\draw[->, line width=0.8pt] (fr) -- (cmp);

\draw[->, line width=0.8pt]
  (fr.north west) .. controls +(-18mm,12mm) and +(18mm,-12mm) .. (rec.south east);

\draw[->, line width=0.8pt]
  (fr.south west) .. controls +(-18mm,-12mm) and +(18mm,12mm) .. (loc.north east);

% Arrow labels
\node[lab] at ($(fr)!0.52!(cmp) + (0,4mm)$) {\small bounds};
\node[lab] at ($(fr)!0.52!(rec) + (0,4mm)$) {\small controls};
\node[lab] at ($(fr)!0.52!(loc) + (0,-4mm)$) {\small cannot localize};

\end{tikzpicture}
\caption{Fourier ratio as a unifying complexity parameter controlling recovery, localization, and dimension.}
\end{figure}

\subsection{Road map}

Section~\ref{sec:background} records an earlier result in which one transmits row-wise Fourier (Gabor) data and studies
random frequency erasures. A key theme is that Fourier-ratio control is intrinsically global, section~\ref{sec:localization} shows that naive slice-by-slice localization can destroy the relevant complexity bounds even on product groups. Section~\ref{sec:background} also states, for ease of comparison, the global Fourier-ratio
recovery theorem from \cite{FRdiscrete}. Section~\ref{sec:basis} states the basis formulation of that theorem for
general bounded orthonormal expansions. Section~\ref{sec:gabor} specializes the basis theorem to two-dimensional
orthonormal Gabor bases. Section~\ref{sec:localization} discusses why localization can obstruct naive slice-by-slice use
of global Fourier-ratio bounds. Section~\ref{sec:complexity} discusses Kolmogorov-type description length and statistical query dimension in this setting. Section~\ref{sec:proofs} contains the proofs of the main results.

\subsection{Gabor recovery theorem with random frequency erasures}
\label{subsec:gabor_row_wise}

Let $f:{\mathbb Z}_N\times{\mathbb Z}_T\to{\mathbb C}$. For each $a\in{\mathbb Z}_T$,
the row-wise Gabor transform is the one-dimensional Fourier transform of the row
$t\mapsto f(t,a)$,
$$
Gf(m,a)=N^{-\frac{1}{2}}\sum_{t\in{\mathbb Z}_N} f(t,a)e^{-2\pi i mt/N}.
$$
\begin{theorem}[Row-wise Gabor transmission under binomial frequency losses {\cite[Theorem 1.8]{GaborFrames}}]
\label{thm:gabor-frames-18}
Suppose $f:{\mathbb Z}_N\times{\mathbb Z}_T\to{\mathbb C}$, and $T=T(N)$ satisfies
$T(N)=o(\sqrt{N}e^N)$. Define
$$
E_{\max}=\max_{a\in{\mathbb Z}_T} |\operatorname{supp}_t(f(t,a))|.
$$
Assume one transmits $Gf(m,a)$ for all $(m,a)\in{\mathbb Z}_N\times{\mathbb Z}_T$, and each
transmitted frequency is lost independently with fixed probability $\theta$, where
$0<\theta<\frac{1}{2E_{\max}}$. Let $M$ be the random set of missing frequencies and define
$$
M_{\max}=\max_{a\in{\mathbb Z}_T} |M\cap\{(m,a):m\in{\mathbb Z}_N\}|.
$$
Then, as $N\to\infty$,
$$
{\mathbb P}\Bigl(M_{\max}<\frac{N}{2E_{\max}}\Bigr)\to 1,
$$
which implies that the probability of unique recovery converges to $1$.
\end{theorem}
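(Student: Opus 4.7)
The plan is to decouple the theorem into a probabilistic statement about the erasure pattern and a deterministic row-by-row recovery statement, then glue them via a union bound.

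First, since each pair $(m,a)\in{\mathbb Z}_N\times{\mathbb Z}_T$ is erased independently with probability $\theta$, the row-wise counts $|M_a|:=|M\cap({\mathbb Z}_N\times\{a\})|$ are mutually independent $\mathrm{Binomial}(N,\theta)$ random variables. The hypothesis $\theta<1/(2E_{\max})$ places the mean $N\theta$ strictly below the target threshold $N/(2E_{\max})$, so a standard Chernoff bound for a single row gives
$$
{\mathbb P}\Bigl(|M_a|\geq \tfrac{N}{2E_{\max}}\Bigr)\leq C\, N^{-1/2}\,e^{-N I}
$$
for a positive rate $I=H\bigl(\tfrac{1}{2E_{\max}}\,\Vert\,\theta\bigr)$, where $H(\cdot\Vert\cdot)$ is the binary KL divergence and the $N^{-1/2}$ factor comes from the Stirling prefactor in the relative-entropy form of the binomial tail. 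A union bound over the $T$ rows then yields
$$
{\mathbb P}\Bigl(M_{\max}\geq \tfrac{N}{2E_{\max}}\Bigr)\leq T\cdot C\,N^{-1/2}\,e^{-NI},
$$
and the hypothesis $T(N)=o(\sqrt{N}\,e^{N})$ is precisely the calibration that makes this quantity tend to $0$ as $N\to\infty$, establishing the first assertion.

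Second, I would condition on the high-probability event $\{M_{\max}<N/(2E_{\max})\}$ and prove unique recovery row by row using the classical Donoho--Stark uncertainty principle. For each $a$, the row $f(\cdot,a)$ has support of size at most $E_{\max}$ and we observe its one-dimensional Fourier transform on all of ${\mathbb Z}_N\setminus M_a$. If $g(\cdot,a)$ is any other candidate with support of size at most $E_{\max}$ producing the same observed Fourier values, then $h=f(\cdot,a)-g(\cdot,a)$ satisfies $|\mathrm{supp}(h)|\leq 2E_{\max}$ and $\mathrm{supp}(\hat h)\subset M_a$. Donoho--Stark forces $|\mathrm{supp}(h)|\cdot|\mathrm{supp}(\hat h)|\geq N$, but on the good event $2E_{\max}\cdot|M_a|<N$, a contradiction unless $h\equiv 0$. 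Hence each row, and therefore $f$ itself, is uniquely recoverable.

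The main obstacle is not the deterministic step --- Donoho--Stark is essentially one line --- nor the qualitative probabilistic step (independence plus Chernoff), but rather calibrating the binomial tail bound sharply enough to match the precise hypothesis $T(N)=o(\sqrt{N}\,e^{N})$. A loose sub-Gaussian Chernoff bound would force a strictly stronger growth restriction on $T$; one needs the sharp Cram\'er form of the binomial deviation together with the $N^{-1/2}$ Stirling prefactor to extract exactly the $\sqrt{N}\,e^{N}$ slack the union bound requires.
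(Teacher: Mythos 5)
You should first note that the paper does not actually prove this statement: it is quoted from \cite[Theorem 1.8]{GaborFrames} purely for context, and Section~\ref{sec:proofs} simply refers the reader to that reference. So the only thing to assess is the internal soundness of your argument. Your deterministic half is fine and is the intended mechanism: on the event $M_{\max}<N/(2E_{\max})$, any competing row $g(\cdot,a)$ with support of size at most $E_{\max}$ that matches the surviving Fourier data gives a difference $h$ with $|\operatorname{supp}(h)|\le 2E_{\max}$ and $\operatorname{supp}(\widehat h)\subset M_a$, so $|\operatorname{supp}(h)|\,|\operatorname{supp}(\widehat h)| < 2E_{\max}\cdot N/(2E_{\max}) = N$, and Donoho--Stark forces $h\equiv 0$.

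The gap is in the probabilistic calibration. Your union bound yields $T\cdot C N^{-1/2} e^{-NI}$ with $I = D\bigl(\tfrac{1}{2E_{\max}}\,\Vert\,\theta\bigr)$, and you claim $T=o(\sqrt N\,e^N)$ is ``precisely'' what makes this tend to zero. That requires $I\ge 1$, which does not follow from $\theta<\tfrac{1}{2E_{\max}}$: the hypothesis only gives $I>0$, and $I$ can be far below $1$ (for instance $E_{\max}=1$, $\theta=0.4$ gives $I=D(0.5\,\Vert\,0.4)\approx 0.02$). In that regime $T=\lfloor e^{N/2}\rfloor$ is permitted by the hypothesis, yet $T e^{-NI}\to\infty$, so your last step does not close. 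Moreover, since the row counts $|M_a|$ are independent $\mathrm{Binomial}(N,\theta)$ variables and the matching lower bound $\mathbb{P}\bigl(|M_a|\ge N/(2E_{\max})\bigr)\ge c\,N^{-1/2}e^{-NI}$ holds (Stirling/Bahadur--Rao), one has $\mathbb{P}\bigl(M_{\max}<N/(2E_{\max})\bigr)=(1-p_N)^T\approx e^{-Tp_N}\to 0$ for such $T$, so no cleverer bound can repair the argument for the statement exactly as transcribed here: the union bound closes only under $T=o\bigl(\sqrt N\,e^{N D(1/(2E_{\max})\Vert\theta)}\bigr)$, or equivalently under a smallness condition on $\theta$ guaranteeing $I\ge 1$, not merely $\theta<\tfrac{1}{2E_{\max}}$. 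You should flag this mismatch and work from the precise hypotheses of Theorem 1.8 in \cite{GaborFrames} rather than the paraphrase quoted in this paper.
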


\begin{remark}
Theorem \ref{thm:gabor-frames-18} concerns random erasures of row-wise Fourier data and
a row-by-row recovery mechanism. In contrast, the global theorem discussed next concerns
recovery from random missing samples using an $\ell^1$ method controlled by a Fourier-ratio
bound on a single coefficient vector. The two viewpoints are compatible, but they address
different loss models and use different complexity parameters.
\end{remark}

\subsection{Why the Fourier ratio can be more natural than approximate sparsity}

Many compressed sensing theorems are stated in terms of sparsity or approximate sparsity. A typical quantitative formulation is that one can recover a vector up to an error controlled by the $\ell^1$ tail of its best $S$-term approximation. For example, Cand\`es, Romberg, and Tao prove the following (we quote it because it is a clean canonical statement of this type).

\begin{theorem}[Stable recovery for approximately sparse vectors {\cite[Theorem 2]{CRT05}}]
\label{thm:CRT-approx-sparse}
Assume the hypotheses of \cite[Theorem 1]{CRT05} (in particular a restricted isometry condition at order $4S$).
Let $x_0\in{\mathbb R}^n$ be arbitrary and let $x_{0,S}$ be the vector obtained by keeping the $S$ largest
entries of $x_0$ in magnitude and setting the rest to $0$. Suppose $y=Ax_0+e$ with $\|e\|_2\le \epsilon$,
and let $x^\ast$ be the solution of
$$
\min_x \|x\|_1
\quad\text{subject to}\quad
\|Ax-y\|_2\le \epsilon.
$$
Then
$$
\|x^\ast-x_0\|_2 \le C_{1,S}\epsilon + C_{2,S}\frac{\|x_0-x_{0,S}\|_1}{\sqrt{S}}.
$$
\end{theorem}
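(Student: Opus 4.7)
The plan is to follow the classical tube and cone argument and carry an additional tail term coming from the best $S$-term approximation. Set $h := x^\ast - x_0$. Since $\|Ax_0 - y\|_2 = \|e\|_2 \le \epsilon$ and $\|Ax^\ast - y\|_2 \le \epsilon$ by feasibility of $x^\ast$, the triangle inequality gives the tube bound $\|Ah\|_2 \le 2\epsilon$.

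Next, I would establish the enlarged cone condition. Let $T_0$ denote the support of $x_{0,S}$. From $\|x_0 + h\|_1 = \|x^\ast\|_1 \le \|x_0\|_1$, splitting each side into contributions from $T_0$ and $T_0^c$ and applying triangle/reverse-triangle inequalities yields
$$
\|h_{T_0^c}\|_1 \le \|h_{T_0}\|_1 + 2\|x_0 - x_{0,S}\|_1.
$$
So $h$ does not lie in the standard sparse cone but in its enlargement by the tail $2\|x_0 - x_{0,S}\|_1$; this is the place where approximate, as opposed to exact, sparsity enters.

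I would then apply the block decomposition. Order the indices of $T_0^c$ by decreasing $|h(i)|$ and partition them into blocks $T_1, T_2, \ldots$ of size $S$; set $T_{01} := T_0 \cup T_1$. Since each entry of $h_{T_{j+1}}$ is at most the average of $|h_{T_j}|$, one has $\|h_{T_{j+1}}\|_2 \le S^{-1/2}\|h_{T_j}\|_1$, which telescopes to
$$
\sum_{j\ge 2} \|h_{T_j}\|_2 \;\le\; S^{-1/2}\|h_{T_0^c}\|_1 \;\le\; \|h_{T_0}\|_2 + 2 S^{-1/2}\|x_0 - x_{0,S}\|_1,
$$
after Cauchy--Schwarz on $T_0$ and the cone condition. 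The RIP at order $4S$ then enters in two ways: as the lower bound $(1-\delta_{4S})\|h_{T_{01}}\|_2^2 \le \|Ah_{T_{01}}\|_2^2$, since $|T_{01}|=2S$, and as the block near-orthogonality estimate $|\langle Ah_{T_{01}}, Ah_{T_j}\rangle| \le \delta_{4S}\|h_{T_{01}}\|_2\|h_{T_j}\|_2$ for $j \ge 2$, since $T_{01}$ and $T_j$ are disjoint sets of total size $\le 3S$. Combining these with $\|Ah\|_2 \le 2\epsilon$ and substituting the block bound closes a linear inequality for $\|h_{T_{01}}\|_2$, yielding $\|h_{T_{01}}\|_2 \le C'_{1,S}\epsilon + C'_{2,S} S^{-1/2}\|x_0 - x_{0,S}\|_1$; the stated bound then follows from $\|h\|_2 \le \|h_{T_{01}}\|_2 + \sum_{j \ge 2}\|h_{T_j}\|_2$ and absorbing constants.

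The main obstacle is the recursion-closing step at the RIP stage: one must verify that the restricted isometry hypothesis at order $4S$ inherited from \cite[Theorem 1]{CRT05} is strong enough to force the coefficient of $\|h_{T_{01}}\|_2$ on the right-hand side of the combined inequality to be strictly less than one, so that the recursion can actually be rearranged. Everything else is bookkeeping, and the constants $C_{1,S}$ and $C_{2,S}$ come out depending only on $\delta_{4S}$ and not on the dimension $n$ or on the sparsity level $S$ itself.
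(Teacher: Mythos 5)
Your sketch is essentially correct, but note that the paper itself offers no proof of this statement: it is quoted verbatim from \cite[Theorem 2]{CRT05} purely for comparison, so the only thing to compare against is the original Cand\`es--Romberg--Tao argument, which your outline faithfully reconstructs (tube constraint $\|Ah\|_2\le 2\epsilon$, the enlarged cone condition with the $2\|x_0-x_{0,S}\|_1$ tail, and the block decomposition plus restricted near-orthogonality). The one place you deviate is the block size: CRT partition $T_0^c$ into blocks of size $3S$ (so that $T_{01}$ has size $4S$, which is exactly why $\delta_{4S}$ appears in their hypothesis $\delta_{3S}+3\delta_{4S}<2$), whereas you use size-$S$ blocks in the later Cand\`es-style bookkeeping; this still works, and the ``main obstacle'' you flag resolves easily, since with size-$S$ blocks the recursion closes as soon as $1-\delta_{2S}-\delta_{3S}>0$, and monotonicity gives $4\delta_{3S}\le\delta_{3S}+3\delta_{4S}<2$, hence $\delta_{3S}<\tfrac12$ and the coefficient is strictly positive. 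With that verification spelled out, your argument yields the stated bound with constants depending only on the restricted isometry constants, as claimed.
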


The restricted isometry condition referred in Theorem \ref{thm:CRT-approx-sparse} essentially requires any selection of $S$ columns from $A$ act approximately like an orthonormal basis. Theorem \ref{thm:CRT-approx-sparse} is extremely useful, but its hypothesis is expressed through a parameter
$S$ that is not always the right way to measure complexity in applications. In many problems one does not
expect exact sparsity, and even the best $S$-term approximation error $\|x_0-x_{0,S}\|_1$ can decay slowly
with $S$. In addition, determining if a matrix satisfies the restricted isometry property is NP-hard (see \cite{Tillman2014}).

The Fourier ratio captures a different notion of compressibility. It measures how concentrated the coefficient
vector is in the sense of $\ell^1/\ell^2$, without requiring an explicit choice of $S$. The next example shows
concretely why this can be preferable. In this example, the coefficient sequence is not approximately sparse in any meaningful sense: there is no threshold for which a small subset captures most of the $\ell^2$ norm. Nevertheless, the Fourier ratio remains small, and the recovery theorem applies. In short, approximate sparsity fails, but the Fourier ratio remains small.

\begin{example}[A simple coefficient model where Fourier ratio is informative]
\label{ex:FR-vs-approx-sparse}
Fix $M\ge 3$ and define a coefficient vector $c\in{\mathbb C}^M$ by
$$
c_j=\frac{1}{j},
\qquad
j=1,2,\dots,M.
$$
This vector is not sparse and does not have compact support. Nevertheless, it is highly compressible in the
sense that many coefficients are small and decay regularly.

We first compute its Fourier-ratio-type complexity.
We have
$$
\|c\|_1=\sum_{j=1}^M \frac{1}{j}
\ge \log M,
$$
and also
$$
\|c\|_2^2=\sum_{j=1}^M \frac{1}{j^2}
\le \sum_{j=1}^\infty \frac{1}{j^2}
=
\frac{\pi^2}{6}.
$$
Therefore
$$
\frac{\|c\|_1}{\|c\|_2}
\ge
\frac{\log M}{\pi/\sqrt{6}}
=
\frac{\sqrt{6}}{\pi}\log M.
$$
In particular, the Fourier-ratio complexity of this vector grows only like $\log M$, even though the vector has
$M$ nonzero entries.

Now compare this with the approximate sparsity term in Theorem \ref{thm:CRT-approx-sparse}.
For any $S<M$, the best $S$-term approximation is obtained by keeping $c_1,\dots,c_S$, so
$$
\|c-c_S\|_1=\sum_{j=S+1}^M \frac{1}{j}
\ge \log\Bigl(\frac{M}{S}\Bigr).
$$
Thus the deterministic error term in Theorem \ref{thm:CRT-approx-sparse} has size at least
$$
\frac{\|c-c_S\|_1}{\sqrt{S}}
\ge
\frac{1}{\sqrt{S}}\log\Bigl(\frac{M}{S}\Bigr).
$$
To make this quantity smaller than a prescribed accuracy parameter $\eta$, one must take $S$ large enough that
$$
\sqrt{S}\ge \frac{1}{\eta}\log\Bigl(\frac{M}{S}\Bigr),
$$
which forces $S$ to scale at least on the order of $(\log M)^2/\eta^2$ (up to lower-order logarithmic effects).

By contrast, the Fourier ratio $\|c\|_1/\|c\|_2$ immediately reports the correct logarithmic size of the
complexity of this vector. This is exactly the reason why our main recovery theorem is stated in terms of
Fourier ratio rather than in terms of an explicit approximate sparsity level.
\end{example}

The purpose of this paper is not to replace sparsity-based formulations of compressed
sensing, but rather to complement them. In many natural situations, signals are not
sparse or even approximately sparse in a meaningful sense, yet they exhibit strong
global compressibility that is well captured by the Fourier ratio. The results in this
paper show that Fourier-ratio-based recovery fits naturally into the existing $\ell^1$
framework and leads to clean recovery guarantees in regimes where sparsity-based
descriptions are awkward or misleading.

Unlike approximate sparsity, which depends on thresholding and is sensitive to basis choice, the Fourier ratio provides a basis-compatible, quantitative notion of effective dimension that continues to govern recovery even when no sparse model is available.

\subsection{The global Fourier-ratio theorem and why it is genuinely global}

A central result in \cite{FRdiscrete} is a global recovery theorem (Theorem 1.21 there)
showing that if a signal has small Fourier ratio, then it can be stably recovered from
randomly missing samples by $\ell^1$ minimization of its Fourier transform. Since we will
compare later arguments to this theorem we state it explicitly.

\begin{theorem}[Global Fourier-ratio recovery {\cite[Theorem 1.21]{FRdiscrete}}]
\label{thm:FR121}
Let $G$ be a finite abelian group and let $f:G\to{\mathbb C}$ be nonzero. Fix $\epsilon\in(0,1)$.
Assume
$$
FR(f)=\frac{\|\widehat f\|_1}{\|\widehat f\|_2}\le r,
$$
where $\widehat f$ is the Fourier transform on $G$ normalized so that Parseval holds. Let $X\subset G$ be a random subset obtained by keeping each point independently with probability $p$.
There exists absolute constants $c, C>0$ such that if
$$
p|G|\ge C\frac{r^2}{\epsilon^2}\log\Bigl(\frac{r}{\epsilon}\Bigr)^2\log |G|,
$$
then with probability at least $1-\exp(-c p|G|)$ the solution $f^*$ to
$$
\min_g \|\widehat g\|_1
\quad\text{subject to}\quad
\|g-f\|_{L^2(X)}\le \epsilon\|f\|_2
$$
satisfies
$$
\|f^*-f\|_2\le 11.47\,\epsilon\|f\|_2.
$$
\end{theorem}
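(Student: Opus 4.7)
The plan is to view this as a standard compressed sensing statement for bounded orthonormal systems, with the role of sparsity played by an \emph{effective sparsity} $S\asymp r^{2}/\epsilon^{2}$ read off from the Fourier ratio. The characters of the finite abelian group $G$, in the normalization that makes Parseval hold, form a bounded orthonormal system with BOS constant of order $1$, so Bernoulli$(p)$ subsampling at $X\subset G$ falls directly into the Cand\`es--Tao / Rauhut framework for random partial Fourier measurements.

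The first concrete step is to fix the effective sparsity level. By Parseval, $\|\widehat f\|_{2}=\|f\|_{2}$, so the hypothesis $FR(f)\le r$ gives $\|\widehat f\|_{1}\le r\|f\|_{2}$. For any $S$,
\[
\frac{\|\widehat f-(\widehat f)_{S}\|_{1}}{\sqrt{S}}\le\frac{\|\widehat f\|_{1}}{\sqrt{S}}\le\frac{r}{\sqrt{S}}\,\|f\|_{2},
\]
so $S=\lceil r^{2}/\epsilon^{2}\rceil$ makes the best-$S$-term tail at most $\epsilon\|f\|_{2}$. This is the Fourier-ratio analogue of the deterministic tail appearing in Theorem~\ref{thm:CRT-approx-sparse}, and it is what sets the sample-complexity threshold: the Fourier ratio hypothesis is used in exactly this one inequality, after which one pretends to be working with an approximately $S$-sparse signal.

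Next I would invoke an off-the-shelf RIP theorem for bounded orthonormal systems (Rauhut, Rudelson--Vershynin): as soon as $p|G|\gtrsim S\,\log^{c}|G|$ for a suitable $c$, the normalized random-sampling operator has restricted isometry constant $\delta_{4S}<\sqrt{2}-1$ with failure probability exponentially small in $p|G|$. Substituting $S=r^{2}/\epsilon^{2}$ reproduces the hypothesized threshold $p|G|\ge C(r^{2}/\epsilon^{2})\log(r/\epsilon)^{2}\log|G|$, with the $\log(r/\epsilon)^{2}$ factor coming from the Dudley-chaining refinements that track sparsity through the logarithms. Once RIP is in force, the noisy $\ell^{1}$-minimization guarantee of Theorem~\ref{thm:CRT-approx-sparse}, applied with measurement constraint $\|g-f\|_{L^{2}(X)}\le\epsilon\|f\|_{2}$, yields
\[
\|f^{\ast}-f\|_{2}\le C_{1,S}\,\epsilon\|f\|_{2}+C_{2,S}\,\frac{\|\widehat f-(\widehat f)_{S}\|_{1}}{\sqrt{S}},
\]
and inserting the tail bound from the previous step collapses both terms into a single $\epsilon\|f\|_{2}$ estimate, producing the stated conclusion once the numerical constants are combined.

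The main obstacle is organizational rather than conceptual: aligning the logarithmic exponents and the absolute constant $11.47$ with the specific versions of the RIP and stable-recovery theorems one quotes. All the analytic content is carried by the BOS--RIP theorem together with the one-line Cauchy--Schwarz / Parseval tail bound above, so the real work lies in selecting a concrete RIP statement whose sample complexity matches the $\log(r/\epsilon)^{2}\log|G|$ shape in the hypothesis and in propagating constants through the Cand\`es--Romberg--Tao inequality at the threshold $\delta_{4S}<\sqrt{2}-1$ to land precisely at the constant $11.47$.
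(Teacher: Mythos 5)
Your proposal is correct and follows essentially the same route the paper takes: the paper itself defers the proof of Theorem~\ref{thm:FR121} to \cite[Section 7]{FRdiscrete}, but its proof of the basis generalization (Theorem~\ref{thm:FR-ONB}) is exactly your argument --- convert the Fourier-ratio bound into effective sparsity $S\sim r^2/\epsilon^2$ (the paper via Lemma~\ref{lem:FR-sparse} and a Haviv--Regev refinement, you via the trivial $\ell^1$-tail bound in the Cand\`es--Romberg--Tao error term), then invoke a bounded-orthonormal-system RIP for the randomly subsampled character system and the standard stability of noisy $\ell^1$ minimization. The remaining bookkeeping you flag (normalization of the sampled operator, the exact logarithmic exponents, and the constant $11.47$) is handled at the same level of detail in the paper, which simply cites \cite{FoucartRauhut} and \cite{FRdiscrete} for those steps.
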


\begin{remark}
Theorem \ref{thm:FR121} is genuinely global. Both the hypothesis and the conclusion depend only
on the Fourier ratio of the full signal $f$. The reason we emphasize this point is that it becomes
tempting, in two-dimensional problems, to localize first and then apply the global theorem to each
slice. Section~\ref{sec:localization} explains why this can fail without additional hypotheses.
\end{remark}

\section{A basis recovery theorem}
\label{sec:basis}

Theorem \ref{thm:FR121} is written in the Fourier basis, but the argument is built from two
ingredients that have standard analogues for general bounded orthonormal systems. The first is
the ``soft sparsity'' mechanism coming from a bound of the form $\|c\|_1\le r\|c\|_2$, which forces
a coefficient vector to have controlled effective support. The second is a restricted isometry type
estimate for randomly sampled measurement operators associated to bounded orthonormal systems.
Since both ingredients are present with various bases, Theorem \ref{thm:FR121} can be restated in a basis
form that is standard in compressive sensing.

The purpose of this section is therefore not to introduce a new recovery mechanism, but to isolate the precise structural assumptions under which the Fourier-ratio argument from Theorem \ref{thm:FR121} continues to apply.

\subsection{Coefficient vectors and the Fourier ratio}

\begin{definition}[Fourier ratio in an orthonormal basis]
Let ${\mathcal B}=\{\phi_j\}_{j\in D}$ be an orthonormal basis of ${\mathbb C}^D$.
For $f\in{\mathbb C}^D$, define its coefficient vector
$$
c_{\mathcal B}(f)=\{\langle f,\phi_j\rangle\}_{j\in D}.
$$
The Fourier ratio of $f$ with respect to ${\mathcal B}$ is
$$
FR_{\mathcal B}(f)=\frac{\|c_{\mathcal B}(f)\|_1}{\|c_{\mathcal B}(f)\|_2}.
$$
\end{definition}

\subsection{A soft sparsification lemma in Fourier-ratio language}

The next lemma is the quantitative point at which a Fourier-ratio bound is converted into a
statement of effective sparsity. It is standard, but we record it explicitly since it is
the bridge between Fourier-ratio hypotheses and the usual sparse approximation statements
used in compressive sensing.

\begin{lemma}[Small Fourier ratio implies approximate sparsity]
\label{lem:FR-sparse}
Let $c\in{\mathbb C}^M$ be nonzero and assume $\|c\|_1\le r\|c\|_2$. Fix $\eta\in(0,1)$ and let
$S$ be the set of indices of the $s$ largest coordinates of $c$ in magnitude, where
$$
s=\left\lceil \frac{r^2}{\eta^2}\right\rceil.
$$
Let $c_S$ be the truncation of $c$ to $S$. Then
$$
\|c-c_S\|_2\le \eta\|c\|_2.
$$
\end{lemma}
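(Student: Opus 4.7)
The plan is to reduce the claim to a pointwise decay estimate on the decreasing rearrangement of the coordinates of $c$, and then control the resulting tail by an integral comparison. Let $c_1^* \ge c_2^* \ge \cdots \ge c_M^* \ge 0$ denote the nonincreasing rearrangement of $(|c_j|)_{j=1}^M$. Since $S$ is the set of indices of the $s$ largest-magnitude entries, the quantity to be bounded is simply
$$
\|c-c_S\|_2^2=\sum_{k=s+1}^M (c_k^*)^2,
$$
so the task is to show that this tail is at most $\eta^2\|c\|_2^2$ for the prescribed value of $s$.

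The first step I would carry out is the pointwise bound $c_k^* \le \|c\|_1/k$, valid for every $k\ge 1$. This is the standard sorting estimate: $c_k^*$ is the smallest of the top $k$ coordinates, so it is dominated by their average, which in turn is at most $\|c\|_1/k$. Invoking the hypothesis $\|c\|_1\le r\|c\|_2$ converts this into the Fourier-ratio-adapted inequality
$$
c_k^*\le \frac{r\|c\|_2}{k},\qquad k\ge 1.
$$
This is the key place where the hypothesis enters: a global $\ell^1/\ell^2$ bound is turned into a genuinely pointwise decay estimate on the sorted entries, which is the quantitative analog of ``effective sparsity''.

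The second step is to square and sum this bound over $k>s$, and compare with an integral. Using $\sum_{k>s}k^{-2}\le \int_s^\infty x^{-2}\,dx = 1/s$, one obtains
$$
\|c-c_S\|_2^2 \le r^2\|c\|_2^2\sum_{k>s}\frac{1}{k^2}\le \frac{r^2\|c\|_2^2}{s}.
$$
With $s=\lceil r^2/\eta^2\rceil$, the factor $r^2/s$ is at most $\eta^2$, which finishes the argument. There is no genuine obstacle in this proof; it is a one-shot consequence of the sorting inequality combined with an integral comparison. The only point requiring minor care is tracking the ceiling in the definition of $s$ to verify that the final inequality comes out cleanly with no extra constant.
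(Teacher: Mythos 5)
Your proposal is correct and follows essentially the same route as the paper's proof: the sorting bound $c_k^*\le \|c\|_1/k\le r\|c\|_2/k$, followed by summing the squared tail and bounding $\sum_{k>s}k^{-2}\le 1/s$, then using $s\ge r^2/\eta^2$. The only cosmetic difference is that you justify the tail sum by integral comparison while the paper states the bound directly; the argument is otherwise identical.
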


\begin{proof}
Write the coordinates of $c$ in nonincreasing order of magnitude,
$$
|c_1|\ge |c_2|\ge \cdots \ge |c_M|.
$$
Since $\sum_{j=1}^M |c_j|=\|c\|_1\le r\|c\|_2$, we have for every $j$ that
$$
j|c_j|\le \sum_{k=1}^j |c_k|\le \|c\|_1\le r\|c\|_2,
$$
so
$$
|c_j|\le \frac{r}{j}\|c\|_2.
$$
Therefore
$$
\|c-c_S\|_2^2=\sum_{j>s}|c_j|^2\le r^2\|c\|_2^2\sum_{j>s}\frac{1}{j^2}\le r^2\|c\|_2^2\frac{1}{s}.
$$
With $s\ge \frac{r^2}{\eta^2}$, this gives $\|c-c_S\|_2\le \eta\|c\|_2$.
\end{proof}

\begin{remark}
Lemma \ref{lem:FR-sparse} is not the sharpest formulation. The proof of \cite[Theorem 1.21]{FRdiscrete}
uses a refined bound that contains the factor $\log\Bigl(\frac{r}{\epsilon}\Bigr)^2$. We will utilize that refinement
later when proving Theorem \ref{thm:FR-ONB}, but the present lemma captures the basic idea.
\end{remark}

 \subsection{A basis version of the signal recovery theorem in \cite{FRdiscrete}}

We now state the basis theorem in a standard bounded orthonormal system setting. The additional
hypothesis
$$
\max_{x\in D}\max_{j\in D} |\phi_j(x)|\le M^{-\frac{1}{2}}
$$
is the usual incoherence condition needed for random subsampling to satisfy a restricted isometry
property, and it is exactly the mechanism that is automatic for the Fourier basis.

\begin{theorem}[Fourier-ratio recovery for bounded orthonormal expansions]
\label{thm:FR-ONB}
Let $D$ be a finite set with $|D|=M$. Let ${\mathcal B}=\{\phi_j\}_{j\in D}$ be an orthogonal
basis of ${\mathbb C}^D$ such that
$$
\max_{x\in D}\max_{j\in D} |\phi_j(x)| \le M^{-\frac{1}{2}}.
$$
Fix $\epsilon\in(0,1)$ and let $f\in{\mathbb C}^D$ be nonzero with
$$
FR_{\mathcal B}(f)\le r.
$$
Let $X\subset D$ be a random subset obtained by keeping each point independently with
probability $p$.

There exists absolute constants $c, C>0$ such that if
$$
pM \ge C \frac{r^2}{\epsilon^2}\log\Bigl(\frac{r}{\epsilon}\Bigr)^2 \log M,
$$
then with probability at least
$$
1-\exp(-c pM),
$$
the solution $f^*$ to the convex program
$$
\min_g \|c_{\mathcal B}(g)\|_1
\quad\text{subject to}\quad
\|g-f\|_{\ell^2(X)}\le \epsilon\|f\|_2
$$
satisfies
$$
\|f^*-f\|_2 \le 11.47\,\epsilon\|f\|_2.
$$
\end{theorem}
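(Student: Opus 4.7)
The plan is to re-run the argument behind Theorem \ref{thm:FR121} after replacing every Fourier-specific tool by its bounded-orthonormal-system analogue. The two ingredients are already named in the remark following Lemma \ref{lem:FR-sparse}: a refined soft-sparsification step that converts the hypothesis $FR_{\mathcal B}(f)\le r$ into an approximate-sparsity model for the coefficient vector $c:=c_{\mathcal B}(f)$, and a restricted isometry estimate for the random sampling operator associated to $\mathcal B$. Because neither step actually requires the Fourier basis, the structure of the proof carries over, and the constant $11.47$ in the conclusion is inherited verbatim from \cite[Theorem 1.21]{FRdiscrete} once the same numerical constants are used throughout.

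\textbf{Step 1: effective sparsity of $c$.} Applied directly, Lemma \ref{lem:FR-sparse} gives the sparsity level $s=\lceil r^2/\epsilon^2\rceil$ together with $\|c-c_S\|_2\le \epsilon\|c\|_2$. For the stable recovery theorem used below, I also need an $\ell^1$ tail bound of the form $\|c-c_S\|_1/\sqrt{s}\le \epsilon\|c\|_2$. Using the rearrangement bound $|c_j|\le r\|c\|_2/j$ from the proof of Lemma \ref{lem:FR-sparse}, one gets $\|c-c_S\|_1\le r\|c\|_2\log(M/s)$, so to absorb the logarithm one inflates $s$ to order $(r/\epsilon)^2\log^2(r/\epsilon)$. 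This is exactly the refinement alluded to in the remark and the source of the $\log(r/\epsilon)^2$ factor in the sample complexity.

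\textbf{Step 2: BOS restricted isometry for the sampling matrix.} The observation $f\mapsto f|_X$ is, in coefficient space, multiplication by the matrix $A=(\phi_j(x))_{x\in X,\,j\in D}$, and the hypothesis $\max_{x,j}|\phi_j(x)|\le M^{-1/2}$ is precisely the normalized bounded-orthonormal-system condition. I would then invoke the standard BOS RIP theorem (Rudelson--Vershynin, Rauhut, Haviv--Regev): once $pM\gtrsim s(\log s)^2\log M$, the rescaled sampling matrix satisfies a restricted isometry property of order $2s$ with a small constant (say $\delta_{2s}<\sqrt{2}-1$), with failure probability bounded by $\exp(-c\,pM)$. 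Substituting the refined $s$ from Step~1 gives exactly the sample complexity stated in Theorem~\ref{thm:FR-ONB}.

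\textbf{Step 3: stable recovery and constants.} With Steps 1 and 2 in hand, the convex program in the theorem is the standard quadratically-constrained basis pursuit in the BOS framework. Applying the stable recovery inequality (Cand\`es--Romberg--Tao / Foucart--Rauhut) yields
$$
\|c^*-c\|_2 \le C_1\,\epsilon\|f\|_2 + C_2\,\|c-c_S\|_1/\sqrt{s},
$$
and substituting both approximate-sparsity estimates from Step~1 collapses this to the claimed $\|f^*-f\|_2\le 11.47\,\epsilon\|f\|_2$, after recalling that $\|c^*-c\|_2=\|f^*-f\|_2$ by orthonormality. The main obstacle is purely bookkeeping: one must thread the constants through Steps 1--3 so that they align with those used in the proof of Theorem~\ref{thm:FR121}, which is what ultimately pins down the $11.47$ factor. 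The genuinely new content is only the observation that the BOS incoherence assumption is the precise minimal hypothesis under which the Fourier-specific step of \cite{FRdiscrete} can be isolated and replaced.
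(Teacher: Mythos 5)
Your overall architecture coincides with the paper's: pass to coefficient space via the unitary analysis map, convert $FR_{\mathcal B}(f)\le r$ into an approximate-sparsity statement for $c=c_{\mathcal B}(f)$, invoke the bounded-orthonormal-system RIP for the randomly subsampled matrix (the paper cites \cite[Chapter 12]{FoucartRauhut} and the refinement from \cite{HavivRegev} as used in \cite{FRdiscrete}), and finish with standard $\ell^1$ stability, importing the constant $11.47$ from \cite{FRdiscrete}. The one concrete problem is the quantitative claim in your Step 1. From $|c_j|\le r\|c\|_2/j$ you correctly get $\|c-c_S\|_1\le r\|c\|_2\log(M/s)$, but that logarithm is essentially $\log M$, not $\log(r/\epsilon)$: when $M$ is much larger than any power of $r/\epsilon$, forcing $\|c-c_S\|_1/\sqrt{s}\le\epsilon\|c\|_2$ by this route requires $s\gtrsim (r/\epsilon)^2\log^2 M$, so inflating $s$ only to order $(r/\epsilon)^2\log^2(r/\epsilon)$ does not absorb it. Moreover, even granting your inflated $s$, substituting it into your own RIP requirement $pM\gtrsim s(\log s)^2\log M$ yields a bound of order $(r/\epsilon)^2\log^4(r/\epsilon)\log M$, not ``exactly the sample complexity stated,'' so the bookkeeping in Steps 1--2 does not close as written.

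The repair is simple and actually streamlines your argument: you do not need the rearrangement bound for the $\ell^1$ tail at all, since trivially $\|c-c_S\|_1\le\|c\|_1\le r\|c\|_2$; hence with $s=\lceil r^2/\epsilon^2\rceil$ from Lemma~\ref{lem:FR-sparse} you already have both $\|c-c_S\|_2\le\epsilon\|c\|_2$ and $\|c-c_S\|_1/\sqrt{s}\le\epsilon\|c\|_2$, and the factor $\log(r/\epsilon)^2$ in the hypothesis of Theorem~\ref{thm:FR-ONB} then arises from the $(\log s)^2$ term in the BOS RIP sample count, which is consistent with where the paper's proof generates it (the paper instead quotes the refined sparsification of \cite{HavivRegev} from the proof of \cite[Theorem 1.21]{FRdiscrete} and controls only the $\ell^2$ tail). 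One further small omission: the BOS RIP statements are conditioned on the realized number of samples $|X|$, which here is binomial with mean $pM$, so you should add the concentration step (the paper uses Hoeffding \cite{Hoeffding63}) to convert the hypothesis on $pM$ into a lower bound on $|X|$ holding with probability $1-\exp(-cpM)$.
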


\begin{remark}[Scope of the basis hypothesis]
The boundedness assumption
$$
\max_{x\in D}\max_{j\in D}|\phi_j(x)|\le M^{-\frac{1}{2}}
$$
is standard in compressive sensing and should be viewed as a property of the
interaction between the sampling basis and the sparsifying basis, rather
than as a property of sparsity itself. It is automatic for the Fourier basis on
finite abelian groups and holds for many structured orthonormal systems arising in
time frequency analysis, including the orthonormal Gabor bases considered below.

By contrast, neither the row-wise Gabor transform (Subsection~\ref{subsec:gabor_row_wise}) nor commonly used orthonormal wavelet bases are typically well localized
in space and therefore fail to satisfy such a uniform bound when sampling is
performed in the standard basis. This does not reflect a deficiency of either, but rather the fact that random pointwise subsampling is
inherently coherent with spatially localized bases. In particular, the present
theorem does not assert that Fourier-ratio recovery holds for all orthonormal
systems, but instead isolates the precise mechanism: recovery is controlled by the
Fourier ratio of the coefficient vector together with an incoherent sampling
model.

This distinction helps explain why results of the present type were not previously
formulated explicitly. Much of the compressed sensing literature is organized
around sparsity models tied to specific bases, whereas the Fourier ratio provides
a basis-independent complexity parameter whose effectiveness depends on the
sampling architecture.
\end{remark}

\begin{remark}
Theorem \ref{thm:FR-ONB} is the basis analogue of Theorem 1.21 in \cite{FRdiscrete}. The constant $11.47$
is the same numerical constant recorded in \cite{FRdiscrete}. One may replace $11.47$ by a generic
absolute constant if one prefers to avoid tracking the final numerical step, but we keep the same
number to emphasize that the proof is the same proof, written in basis language.
\end{remark}

\subsection{A quick coherence check for constant-modulus bases}

In many discrete settings, one encounters orthonormal bases whose entries all have the same modulus
in the sampling domain. This immediately implies the boundedness condition in Theorem \ref{thm:FR-ONB}.

\begin{lemma}[Constant-modulus bases are bounded]
\label{lem:constant-modulus}
Let $D$ be a finite set with $|D|=M$ and let ${\mathcal B}=\{\phi_j\}_{j\in D}$ be an orthonormal
basis of ${\mathbb C}^D$. Assume that for every $j\in D$ and every $x\in D$,
$$
|\phi_j(x)|=M^{-\frac{1}{2}}.
$$
Then ${\mathcal B}$ satisfies the boundedness condition
$$
\max_{x\in D}\max_{j\in D}|\phi_j(x)|\le M^{-\frac{1}{2}}.
$$
\end{lemma}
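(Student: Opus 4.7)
The plan is very short because the statement is essentially a definitional unpacking. First, I would simply substitute the hypothesis into the left-hand side: since $|\phi_j(x)| = M^{-1/2}$ for every $(j,x) \in D \times D$, the double maximum is attained at every pair and equals $M^{-1/2}$, so in particular it is $\le M^{-1/2}$, which is the incoherence bound required by Theorem~\ref{thm:FR-ONB}. The entire proof is the one-line chain
$$
\max_{x\in D}\max_{j\in D}|\phi_j(x)| \;=\; M^{-1/2} \;\le\; M^{-1/2}.
$$
I do not expect any real obstacle; orthonormality is not used in this inequality, only the numerical value of the constant in the hypothesis.

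The only point worth recording, and what I would add as a short remark after the proof, is that the value $M^{-1/2}$ in the hypothesis is not a separate assumption but is forced by orthonormality. Applying Parseval to a single basis vector $\phi_j$ whose modulus is constant in $x$ gives $1 = \|\phi_j\|_2^2 = \sum_{x\in D}|\phi_j(x)|^2 = M \cdot |\phi_j(x_0)|^2$, so the common modulus is automatically $M^{-1/2}$. In other words, the lemma could equivalently be stated with the weaker hypothesis that $|\phi_j|$ is constant on $D$ for each $j$, the numerical value then being a consequence rather than an input. The substantive payoff of the lemma is thus contextual rather than computational: it records that discrete Fourier bases on finite abelian groups, Hadamard bases, and tensor products of such systems automatically satisfy the incoherence hypothesis of Theorem~\ref{thm:FR-ONB}, so no further coherence check is needed when that theorem is applied to the orthonormal Gabor bases in the next section. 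The main temptation to resist in writing this up is burying a one-line verification under unnecessary formalism.
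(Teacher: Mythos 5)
Your proof is correct and coincides with the paper's, which simply observes the bound is immediate from the hypothesis. Your added remark that orthonormality forces the constant modulus to equal $M^{-1/2}$ (via Parseval applied to a single $\phi_j$) is a nice observation but goes beyond what the lemma requires.
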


\begin{proof}
This is immediate from the hypothesis.
\end{proof}

\begin{remark}
The standard Fourier basis on a finite abelian group is of this form. Many other structured
orthonormal systems also have constant modulus, including the Walsh-Hadamard system on
${\mathbb Z}_2^n$.
\end{remark}

\section{Application to two-dimensional Gabor bases}
\label{sec:gabor}

In this section, we specialize to $D={\mathbb Z}_N\times{\mathbb Z}_T$, so $M=NT$, and consider an
orthonormal Gabor basis
$$
{\mathcal G}=\{g_{m,a}\}_{(m,a)\in{\mathbb Z}_N\times{\mathbb Z}_T}
$$
for ${\mathbb C}^{{\mathbb Z}_N\times{\mathbb Z}_T}$, as in \cite{GaborFrames}. The coefficient
vector is
$$
c_{\mathcal G}(f)=\{\langle f,g_{m,a}\rangle\}_{m,a}.
$$

\begin{corollary}[Corollary to Theorem \ref{thm:FR-ONB}: Global recovery using Gabor coefficients]
\label{cor:gabor-global}
Let $f:{\mathbb Z}_N\times{\mathbb Z}_T\to{\mathbb C}$ and suppose the orthonormal Gabor basis
${\mathcal G}$ satisfies
$$
\max_{(t,a)\in{\mathbb Z}_N\times{\mathbb Z}_T}\max_{(m,n)\in{\mathbb Z}_N\times{\mathbb Z}_T}
|g_{m,n}(t,a)| \le (NT)^{-\frac{1}{2}}.
$$
Define
$$
FR_{\mathcal G}(f)=\frac{\|c_{\mathcal G}(f)\|_1}{\|c_{\mathcal G}(f)\|_2}.
$$
Assume $FR_{\mathcal G}(f)\le r$. Let $X\subset{\mathbb Z}_N\times{\mathbb Z}_T$ be a random
subset obtained by independent sampling with probability $p$. If
$$
pNT \ge C \frac{r^2}{\epsilon^2}\log\Bigl(\frac{r}{\epsilon}\Bigr)^2 \log(NT),
$$
then with probability at least $1-\exp(-c pNT)$ the signal $f$ is stably recoverable from
its samples on $X$ by minimizing $\|c_{\mathcal G}(g)\|_1$ subject to the $\ell^2(X)$ fidelity
constraint, and the error satisfies
$$
\|f^*-f\|_2 \le 11.47\,\epsilon\|f\|_2.
$$
\end{corollary}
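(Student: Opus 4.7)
The plan is to derive Corollary~\ref{cor:gabor-global} as a direct specialization of Theorem~\ref{thm:FR-ONB}, with nothing substantive to add beyond checking that the hypotheses match the abstract setup. I would set $D = \mathbb{Z}_N \times \mathbb{Z}_T$ so that $|D| = M = NT$, and identify $\mathcal{B}$ with the given orthonormal Gabor basis $\mathcal{G} = \{g_{m,a}\}_{(m,a) \in \mathbb{Z}_N \times \mathbb{Z}_T}$. Under this identification, the abstract coefficient vector $c_{\mathcal{B}}(f)$ becomes exactly the Gabor coefficient vector $c_{\mathcal{G}}(f)$, and the abstract Fourier ratio $FR_{\mathcal{B}}(f)$ becomes $FR_{\mathcal{G}}(f)$.

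Next I would verify the three hypotheses of Theorem~\ref{thm:FR-ONB} in turn. First, the orthonormality of $\mathcal{G}$ is built into the setup of Section~\ref{sec:gabor} (and into the Gabor construction of \cite{GaborFrames}). Second, the boundedness condition $\max_{(t,a)} \max_{(m,n)} |g_{m,n}(t,a)| \le (NT)^{-1/2}$ is precisely the incoherence hypothesis imposed in the corollary, which matches $\max_x \max_j |\phi_j(x)| \le M^{-1/2}$ with $M = NT$. Third, the Fourier-ratio bound $FR_{\mathcal{G}}(f) \le r$ matches $FR_{\mathcal{B}}(f) \le r$ verbatim. Since the random sampling model is described in identical terms (each point of $D$ retained independently with probability $p$), invoking Theorem~\ref{thm:FR-ONB} with these substitutions yields the stated sampling budget $pNT \ge C (r^2/\epsilon^2) \log(r/\epsilon)^2 \log(NT)$, the same success probability $1 - \exp(-c pNT)$, and the same error bound $\|f^* - f\|_2 \le 11.47\,\epsilon\|f\|_2$.

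There is essentially no technical obstacle: the soft sparsification step (Lemma~\ref{lem:FR-sparse}) and the bounded orthonormal system restricted isometry machinery have already been absorbed into the proof of Theorem~\ref{thm:FR-ONB}, and the role of Corollary~\ref{cor:gabor-global} is only to record that the abstract statement covers the Gabor setting once the coherence bound is assumed. The one place where some care is warranted is semantic rather than technical: the corollary must make explicit that the functional being minimized is the $\ell^1$ norm of the Gabor coefficient vector, not of $f$ itself, and that the fidelity constraint $\|g - f\|_{\ell^2(X)} \le \epsilon \|f\|_2$ is imposed with the physical sampling norm on $\mathbb{Z}_N \times \mathbb{Z}_T$. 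Both of these are faithful translations of the abstract convex program in Theorem~\ref{thm:FR-ONB}, so the conclusion transfers without modification.
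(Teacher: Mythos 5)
Your proposal is correct and matches the paper's treatment: the paper likewise derives Corollary~\ref{cor:gabor-global} by taking $D={\mathbb Z}_N\times{\mathbb Z}_T$, $M=NT$, identifying ${\mathcal B}$ with the orthonormal Gabor basis ${\mathcal G}$, and invoking Theorem~\ref{thm:FR-ONB} directly once the coherence bound is assumed. Nothing further is needed.
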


\begin{remark}
Corollary \ref{cor:gabor-global} explains precisely in what sense the global theorem from
\cite{FRdiscrete} applies to the Gabor setting. One views the full Gabor coefficient vector as
a single orthonormal expansion, and then Theorem \ref{thm:FR-ONB} applies directly.
This is conceptually different from Theorem \ref{thm:gabor-frames-18}, which studies
frequency erasures in the row-wise Fourier data and uses a maximal row-support criterion.
\end{remark}

\subsection{Other natural orthonormal systems}

Theorem \ref{thm:FR-ONB} is formulated for bounded orthonormal bases. Besides Fourier and orthonormal
Gabor bases, there are several other standard orthonormal systems where the same conclusion applies.

One family consists of constant-modulus bases, to which Lemma \ref{lem:constant-modulus} applies directly.
Examples include the Walsh-Hadamard basis on ${\mathbb Z}_2^n$, and more generally Fourier-type character
bases on finite abelian groups.

A second family consists of wavelet-type orthonormal bases on discrete grids, such as the Haar basis on
dyadic grids. These bases are not constant modulus, but they are still bounded in the sense required by
Theorem \ref{thm:FR-ONB} after normalization, and the same random sampling mechanism applies.

A third family arises from orthonormal polynomial systems on finite sets equipped with a uniform measure,
where boundedness may hold after suitable normalization. In such examples, verifying the bound
$\max_{x,j}|\phi_j(x)|\le M^{-\frac{1}{2}}$ is a concrete, checkable step, and once it is verified the
Fourier-ratio recovery conclusion follows exactly as stated.

\section{Why localization creates difficulties}
\label{sec:localization}

The previous section shows that there is no obstruction to applying the Fourier-ratio recovery
theorem to Gabor coefficients when they are treated globally. Difficulties arise only when one
attempts to apply the theorem after localization, for example by slicing a two-dimensional signal
into rows and treating each row independently.

We record a simple universal obstruction in the Fourier case. For $f:{\mathbb Z}_N\times{\mathbb Z}_T
\to{\mathbb C}$, write $f_a(t)=f(t,a)$ for the $a$th row.

\begin{theorem}[Localization obstruction]
\label{thm:obstruction}
Let $f:{\mathbb Z}_N\times{\mathbb Z}_T\to{\mathbb C}$. Then
$$
\max_{a\in{\mathbb Z}_T} FR(f_a) \ge \frac{FR(f)}{\sqrt{T}},
$$
where $FR(f)$ is computed using the two-dimensional Fourier transform on
${\mathbb Z}_N\times{\mathbb Z}_T$ and $FR(f_a)$ is computed using the one-dimensional Fourier
transform on ${\mathbb Z}_N$.
\end{theorem}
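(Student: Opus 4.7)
The plan is to control $\|\hat f\|_1$ and $\|\hat f\|_2$ separately in terms of the slicewise one dimensional Fourier data and then combine the estimates to extract a factor of $\max_a FR(f_a)$. The key structural identity, following immediately from separability of the two dimensional exponential, is
$$
\hat f(m,b)=\frac{1}{\sqrt T}\sum_{a\in\mathbb Z_T}\widehat{f_a}(m)\,e^{-2\pi i ba/T},
$$
so for each fixed $m$ the map $b\mapsto \hat f(m,b)$ is the one dimensional Fourier transform on $\mathbb Z_T$ of $a\mapsto \widehat{f_a}(m)$. Plancherel on $\mathbb Z_T$ in the $b$ variable is lossless, so I would apply it on the $\ell^2$ side of the ratio; summing the pointwise identity over $m$ yields the clean denominator identity
$$
\|\hat f\|_2^{\,2}=\sum_{a\in\mathbb Z_T}\|\widehat{f_a}\|_2^{\,2}.
$$

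For the numerator I would apply Cauchy-Schwarz in $b$ to the same partial transform, obtaining
$$
\sum_b|\hat f(m,b)|\le\sqrt T\Bigl(\sum_a|\widehat{f_a}(m)|^2\Bigr)^{1/2},
$$
so that summing in $m$ bounds $\|\hat f\|_1$ by $\sqrt T$ times a mixed norm of the slice transforms. The slice hypothesis $\|\widehat{f_a}\|_1=FR(f_a)\|\widehat{f_a}\|_2\le (\max_a FR(f_a))\|\widehat{f_a}\|_2$ is then inserted to replace slice $\ell^1$ information by slice $\ell^2$ information weighted by the worst slice Fourier ratio, and the Plancherel identity above relates the resulting $\ell^2$ mass back to $\|\hat f\|_2$. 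Combining gives an estimate of the form $\|\hat f\|_1\le\sqrt T(\max_a FR(f_a))\|\hat f\|_2$, after which dividing by $\|\hat f\|_2$ produces the claimed lower bound $\max_a FR(f_a)\ge FR(f)/\sqrt T$.

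The main obstacle is the $\sqrt T$ bookkeeping. A naive chain of Cauchy-Schwarz estimates, one on the $b$ sum to turn $\ell^1$ into $\ell^2$ and another on the $a$ sum to turn $\ell^1$ into $\ell^2$, inserts two factors of $\sqrt T$ and gives only the weaker bound $\max_a FR(f_a)\ge FR(f)/T$. The crux is therefore to apply the Plancherel step precisely once, at the correct stage, and to deploy the slice Fourier-ratio hypothesis on the slice $\ell^2$ mass rather than on the slice $\ell^1$ mass, so that only the single declared $\sqrt T$ factor survives. A tightness check against $f(t,a)=g(t)h(a)$ with $h$ chosen so that $FR(h)\asymp\sqrt T$ (for example $h$ concentrated at a single $a$, in which case only one slice is nonzero and $FR(f)=\sqrt T\,FR(g)=\sqrt T\,\max_a FR(f_a)$) confirms that the single $\sqrt T$ cannot be improved and should guide which Cauchy-Schwarz step to avoid.
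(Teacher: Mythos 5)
Your argument is sound up to and including the bound $\|\widehat f\|_1\le\sqrt T\sum_m\bigl(\sum_a|\widehat{f_a}(m)|^2\bigr)^{1/2}$, but the final combination step is a genuine gap, not a bookkeeping issue. The slice hypothesis controls $\|\widehat{f_a}\|_1=\sum_m|\widehat{f_a}(m)|$ row by row, i.e.\ the mixed norm in which the $m$-sum is taken \emph{inside} and the $a$-aggregation outside, whereas what you must bound is the mixed norm $\sum_m\bigl(\sum_a|\widehat{f_a}(m)|^2\bigr)^{1/2}$ with the $\ell^2_a$ norm inside; by Minkowski's inequality the latter dominates $\bigl(\sum_a\|\widehat{f_a}\|_1^2\bigr)^{1/2}$, so inserting $\|\widehat{f_a}\|_1\le R\|\widehat{f_a}\|_2$ cannot yield $\sum_m\bigl(\sum_a|\widehat{f_a}(m)|^2\bigr)^{1/2}\le R\,\|\widehat f\|_2$. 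Moreover no repair is possible for the genuine two-dimensional transform: take $T\le N$ and $f(t,a)=e^{2\pi i a t/N}$, a different pure frequency on each row. Every slice is a character, so $\max_a FR(f_a)=1$, while $\widehat f$ has constant modulus on a $T\times T$ set of frequency pairs, so $FR(f)=T$. Thus your target inequality $\|\widehat f\|_1\le\sqrt T\,\bigl(\max_a FR(f_a)\bigr)\|\widehat f\|_2$ fails, and only the weaker bound $\max_a FR(f_a)\ge FR(f)/T$ --- exactly the ``naive chain'' you wanted to avoid --- is available and is in fact sharp for the full two-dimensional transform.

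It is worth knowing how the paper's own proof avoids this: it asserts $\widehat f(m,a)=\widehat{f_a}(m)$, i.e.\ it computes $FR(f)$ from the row-wise partial (Gabor-type) transform $Gf(m,a)=\widehat{f_a}(m)$ rather than from the two-dimensional group Fourier transform. For $Gf$ one has $\|Gf\|_1=\sum_a\|\widehat{f_a}\|_1$ and $\|Gf\|_2^2=\sum_a\|\widehat{f_a}\|_2^2$, and a single Cauchy--Schwarz in $a$ gives the $\sqrt T$ bound immediately; your tensor example with a delta in the second variable shows $\sqrt T$ cannot be improved in that formulation. But for the transform named in the statement, the chirp example above shows the claimed inequality itself does not hold, so the obstruction you ran into is real: your honest attempt to work with the correct identity $\widehat f(m,b)=T^{-1/2}\sum_a\widehat{f_a}(m)e^{-2\pi i ab/T}$ exposes a mismatch between the theorem as written and the argument that proves it, rather than a fixable flaw in your Cauchy--Schwarz strategy.
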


\begin{remark}
Theorem \ref{thm:obstruction} is not a contradiction with the global results. It says that even
though the rows are orthogonal in $\ell^2$, the $\ell^1$ mass of Fourier coefficients will concentrate
heavily on a single row. In particular, one cannot expect uniform Fourier-ratio control on all
rows unless the global Fourier ratio is already extremely small.
\end{remark}

\begin{proposition}[Localization obstruction on finite abelian groups]
\label{prop:localization-abelian}
Let $G$ be a finite abelian group admitting a decomposition
$$
G = H \oplus K.
$$
Let $f : G \to {\mathbb C}$, and for each $k \in K$ define the slice
$$
f_k(h) = f(h,k), \quad h \in H.
$$
Let $FR_G(f)$ denote the Fourier ratio of $f$ computed using the Fourier transform on $G$,
and let $FR_H(f_k)$ denote the Fourier ratio of $f_k$ computed using the Fourier transform
on $H$.

Then one has the bound
$$
\max_{k \in K} FR_H(f_k) \ge \frac{FR_G(f)}{\sqrt{|K|}}.
$$
\end{proposition}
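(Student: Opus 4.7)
My plan is to push the Fourier ratio comparison through the character--group decomposition $\widehat G\cong\widehat H\oplus\widehat K$ and reduce it to slicewise statements about the $\widehat{f_k}$. With the Parseval-normalized convention on both $G$ and $H$, the character factorization gives
\[
\widehat f(\xi,\eta)\;=\;|K|^{-1/2}\sum_{k\in K}\widehat{f_k}(\xi)\,\overline{\chi_\eta(k)},
\]
so that for each fixed $\xi\in\widehat H$ the map $\eta\mapsto \widehat f(\xi,\eta)$ is, up to the $|K|^{-1/2}$ factor, the discrete Fourier transform on $K$ of the column $k\mapsto \widehat{f_k}(\xi)$. This is the only structural input I would need from the group decomposition.

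Next, I would handle the two norms separately. For the $\ell^2$ side, columnwise Parseval on $K$ yields $\sum_\eta |\widehat f(\xi,\eta)|^2=\sum_k|\widehat{f_k}(\xi)|^2$ for each $\xi$, and summing in $\xi$ gives the slice-Parseval identity $\|\widehat f\|_2^2=\sum_k\|\widehat{f_k}\|_2^2$. For the $\ell^1$ side, the universal bound $\|u\|_{\ell^1(K)}\le \sqrt{|K|}\,\|u\|_{\ell^2(K)}$ applied columnwise to $u=\widehat f(\xi,\cdot)$, combined with the $\eta$-Parseval identity just derived, gives
\[
\sum_\eta |\widehat f(\xi,\eta)|\;\le\;\sqrt{|K|}\,\Bigl(\sum_k |\widehat{f_k}(\xi)|^2\Bigr)^{1/2},
\]
and summing in $\xi$ produces the master inequality $\|\widehat f\|_1\le \sqrt{|K|}\sum_\xi (\sum_k |\widehat{f_k}(\xi)|^2)^{1/2}$.

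The final step is to feed the slicewise Fourier-ratio hypothesis into this master inequality. Writing $M=\max_{k\in K} FR_H(f_k)$, so that $\|\widehat{f_k}\|_1\le M\,\|\widehat{f_k}\|_2$ for every $k$, I would open the inner $\ell^2_k$ norm via the elementary bound $(\sum_k a_k^2)^{1/2}\le \sum_k a_k$ for nonnegative $a_k$, swap the order of summation to obtain $\sum_k\|\widehat{f_k}\|_1$, apply the slice-FR hypothesis to replace this by $M\sum_k\|\widehat{f_k}\|_2$, and finally use Cauchy--Schwarz in $k$ combined with the slice-Parseval identity $\sum_k\|\widehat{f_k}\|_2^2=\|\widehat f\|_2^2$. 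Rearranging the resulting inequality then yields the claim. The main point that needs care is the bookkeeping of the $|K|$-factors produced at each stage; ensuring that the final constant lands at $\sqrt{|K|}$ rather than at a larger power of $|K|$ is where the whole quantitative content of the proposition sits, and this is the only step where I would expect any real attention to be required.
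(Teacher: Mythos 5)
Your factorization of $\widehat f$ through $\widehat G\cong\widehat H\oplus\widehat K$ is correct, but the final chain you outline does not reach the stated constant, and the loss is not a bookkeeping issue you can tighten away. Track the factors: the master inequality costs one $\sqrt{|K|}$; then opening $\bigl(\sum_k|\widehat{f_k}(\xi)|^2\bigr)^{1/2}\le\sum_k|\widehat{f_k}(\xi)|$, swapping sums, applying $\|\widehat{f_k}\|_1\le M\|\widehat{f_k}\|_2$ with $M=\max_k FR_H(f_k)$, and finishing with Cauchy--Schwarz in $k$ against the slice-Parseval identity costs a second $\sqrt{|K|}$. You end with $\|\widehat f\|_1\le |K|\,M\,\|\widehat f\|_2$, i.e.\ $\max_k FR_H(f_k)\ge FR_G(f)/|K|$, which is weaker than the proposition by exactly $\sqrt{|K|}$. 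Moreover, in your setup (the full Fourier transform on $G$, genuinely transforming in the $K$ variable, which is what your displayed identity encodes) this loss is unavoidable: take $G={\mathbb Z}_N\oplus{\mathbb Z}_N$ and $f(h,k)=e^{2\pi i hk/N}$. Every slice is a character of $H$, so $FR_H(f_k)=1$ for all $k$, while $\widehat f$ has constant modulus on all of $\widehat G$, so $FR_G(f)=N=|K|$. Your bound $FR_G(f)/|K|$ is then attained with equality, whereas the claimed bound $FR_G(f)/\sqrt{|K|}=\sqrt N$ fails for the full transform. So the missing $\sqrt{|K|}$ cannot be recovered along your route by sharper constants; the step $\sum_\xi\bigl(\sum_k|\widehat{f_k}(\xi)|^2\bigr)^{1/2}\le M\,\|\widehat f\|_2$ that would be needed instead is false (same example).

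The paper's proof diverges from yours at precisely this point: it never transforms in the $K$ variable. It works from the identity $\widehat f(\xi,k)=\widehat{f_k}(\xi)$ (the transform ``viewed row-wise'', i.e.\ the partial transform in the $H$ variable only), under which $\|\widehat f\|_1=\sum_k\|\widehat{f_k}\|_1$ holds exactly and $\|\widehat f\|_2^2=\sum_k\|\widehat{f_k}\|_2^2$ by Parseval; a single Cauchy--Schwarz in $k$ then gives $\|\widehat f\|_1\le\sqrt{|K|}\,M\,\|\widehat f\|_2$, which is the stated $\sqrt{|K|}$ bound. Your more careful character-sum factorization over $K$ is exactly what separates your computation from the paper's, and it exposes that the $\sqrt{|K|}$ conclusion depends on that row-wise reading of the transform on $G$: with the standard full Fourier transform on $G=H\oplus K$, as literally written in the statement, only the $|K|$ bound your argument produces is available. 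To match the paper you would adopt the row-wise identity, after which your remaining steps collapse to its two-line proof; if you keep the full transform, you should state and prove the weaker $|K|$ bound (which your example shows is sharp) and flag the discrepancy.
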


\begin{remark}
The proof of Proposition \ref{prop:localization-abelian} is identical to that of Theorem \ref{thm:obstruction}. It uses only Parseval's
identity and the Cauchy Schwarz inequality applied across the $K$ variable, and does not
depend on any arithmetic or geometric structure of the group beyond orthogonality of the
Fourier decomposition.

This formulation shows that the degradation of Fourier-ratio control under localization
is governed solely by the number of orthogonal components $|K|$. In particular, the factor
$\sqrt{T}$ appearing in Theorem \ref{thm:obstruction} reflects the size of the slicing index set rather than any
special feature of the time frequency setting.

From a complexity-theoretic point of view, the proposition indicates that localization
forces an intrinsic inflation of effective complexity by a factor comparable to $|K|$.
This observation will reappear in Section~\ref{sec:complexity}, where both Kolmogorov-type description length
bounds and statistical query dimension estimates scale naturally with the same parameter.
\end{remark}

\section{Measures of Complexity}
\label{sec:complexity}

\subsection{Connections with the theory of Kolmogorov complexity}

We briefly explain how a Fourier-ratio bound yields an explicit algorithmic description-length bound.
Throughout this subsection, $G$ denotes a finite abelian group equipped with the uniform probability
measure, and we regard $G$ and a fixed and uniform encoding of $\widehat G$ as background data. In particular,
given $\gamma\in \widehat G$, we assume there is a canonical way to represent $\gamma$ by a binary
string of length at most $C\log|G|$, where $C>0$ is an absolute constant.

\begin{definition}[\cite{LV19}] \label{def:kolmogorovcomplexity} Let \(U\) be a fixed universal prefix-free Turing machine.  
For a function \( f : \mathbb{Z}_N \to \mathbb{C} \), its Kolmogorov complexity \(K_U(f)\) is defined as
\[
K_U(f) \;=\; \min \{ \, |p| \;:\; U(p, x) = f(x) \ \text{for all } x \in \mathbb{Z}_N \,\},
\]
where \(|p|\) denotes the length of the binary program \(p\).
\end{definition} 

As discussed in \cite{FRdiscrete}, this definition is too sensitive to small perturbations for us to use. As such, we use the following definition.

\begin{definition}\cite{VV10}
Let \(U\) be a fixed universal prefix-free Turing machine.  For a function \( f : \mathbb{Z}_N \to \mathbb{C} \), its algorithmic rate distortion complexity of $f$ at accuracy
$\epsilon\in(0,1)$ by
$$
K_U(f,\epsilon)=\min\{K_U(s): s\ \text{is a binary program and}\ \|U(s)-f\|_2\le \epsilon\|f\|_2\},
$$
where $U(s)$ is interpreted as the function on $G$ output by running $U$ on input $s$ and then
decoding the output into an element of ${\mathbb C}^G$. If $U(s)$ does not halt or does not output a
function on $G$, we interpret $\|U(s)-f\|_2=\infty$.
\end{definition}

We normalize the Fourier transform on $G$ so that Parseval holds, so
$$
\|f\|_2=\|\widehat f\|_2.
$$
We write
$$
FR_G(f)=\frac{\|\widehat f\|_1}{\|\widehat f\|_2}.
$$

\begin{theorem}[Fourier ratio controls algorithmic rate distortion on finite abelian groups]
\label{thm:kolmogorov-FR-abelian}
Let $G$ be a finite abelian group and let $f:G\to{\mathbb C}$ be nonzero. Fix $\epsilon\in(0,1)$
and assume
$$
FR_G(f)\le r.
$$
There exist absolute constants $C_0,C_1>0$ and a constant $C_U>0$ depending only on the choice of
universal machine $U$ such that
$$
K_U(f,\epsilon)
\le
C_0\frac{r^2}{\epsilon^2}\log\Bigl(\frac{r}{\epsilon}\Bigr)^2\log|G|
+
C_1\frac{r^2}{\epsilon^2}\log\Bigl(\frac{r}{\epsilon}\Bigr)^3
+
C_U.
$$
In particular, for fixed $\epsilon$, the description length grows at most on the order of
$\frac{r^2}{\epsilon^2}\log(\frac{r}{\epsilon})^2\log|G|$.
\end{theorem}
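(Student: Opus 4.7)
\medskip

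\noindent\textbf{Proof proposal.} The plan is to convert the Fourier-ratio hypothesis into an explicit sparse approximation of $\widehat f$, then bound the number of bits needed to encode that approximation, and finally wrap the whole thing in a fixed-length decoder. Thus there are three budgets to control: a truncation error budget, a quantization error budget, and a description-length budget, all of which must be balanced against the accuracy parameter $\epsilon$.

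First I would split $\epsilon$ as $\epsilon = \epsilon_1 + \epsilon_2$ with, say, $\epsilon_1 = \epsilon_2 = \epsilon/2$. Applying the refined version of Lemma~\ref{lem:FR-sparse} (the sharper form alluded to after that lemma and already used in the proof of Theorem~\ref{thm:FR121}) to $\widehat f$ with tolerance $\epsilon_1$ produces a set $S \subset \widehat G$ with
\[
|S| \;\le\; C\,\frac{r^2}{\epsilon^2}\log\!\Bigl(\tfrac{r}{\epsilon}\Bigr)^{2}
\]
such that $\|\widehat f - (\widehat f)_S\|_2 \le \epsilon_1\|\widehat f\|_2 = \epsilon_1\|f\|_2$ by Parseval. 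This yields the first term $C_0\frac{r^2}{\epsilon^2}\log(\frac{r}{\epsilon})^2\log|G|$ in the bound, because I will spend $O(\log|G|)$ bits to name each element of $S$ inside $\widehat G$.

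Next I would quantize the coefficients on $S$. Normalize by $\|f\|_2$ so that each of the $|S|$ complex coefficients lies in a disk of radius at most $\|\widehat f\|_1/\|f\|_2 \le r$. Using a uniform grid of spacing $\delta = \epsilon_2\|f\|_2/\sqrt{|S|}$, each coefficient is described by $O(\log(r/\delta)) = O(\log(r\sqrt{|S|}/\epsilon_2))$ bits, which is $O(\log(r/\epsilon))$ after substituting the value of $|S|$. The total quantization $\ell^2$-error is at most $\sqrt{|S|}\,\delta = \epsilon_2\|f\|_2$. Combining the truncation and quantization estimates with the triangle inequality gives $\|\widetilde f - f\|_2 \le \epsilon\|f\|_2$, where $\widetilde f$ is the inverse Fourier transform of the truncated and quantized vector. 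Multiplying the per-coordinate cost by $|S|$ gives the second term $C_1\frac{r^2}{\epsilon^2}\log(\frac{r}{\epsilon})^3$. Finally, the scalar $\|f\|_2$ itself is encoded to the same precision, contributing an $O(\log(r/\epsilon))$ additive term absorbed into the same expression.

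The program $s$ is then a prefix-free concatenation of the $(\log|G|)$-bit names of elements of $S$, their quantized coefficients, the norm $\|f\|_2$, and a fixed-length decoder that (i) reconstructs the sparse vector in ${\mathbb C}^{\widehat G}$, (ii) applies the inverse Fourier transform on $G$, and (iii) outputs the resulting element of ${\mathbb C}^G$. The decoder depends only on $U$ and on the canonical encoding of $\widehat G$, contributing the additive $C_U$. The main obstacle I anticipate is the bookkeeping at two points: tracking the prefix-free self-delimiting overhead (which costs only another $O(\log)$ factor already absorbed into $C_1$), and arranging the refined sparsification so that the logarithmic factor inside $|S|$ is exactly $\log(r/\epsilon)^2$ rather than a higher power; for this I would cite the same sharpening used in \cite{FRdiscrete} rather than reprove it, as the statement of the present theorem is explicitly calibrated to that refinement.
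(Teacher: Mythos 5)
Your proposal is correct and follows essentially the same route as the paper's proof: refined sparsification of $\widehat f$ to a set $S$ of size $O\bigl(\frac{r^2}{\epsilon^2}\log(\frac{r}{\epsilon})^2\bigr)$, uniform quantization of the retained coefficients at spacing $\sim\epsilon\|f\|_2/\sqrt{|S|}$, and an encoding that spends $O(\log|G|)$ bits per character index and $O(\log(\frac{r}{\epsilon}))$ bits per quantized coefficient, plus a fixed decoder absorbed into $C_U$. Your explicit handling of encoding $\|f\|_2$ (equivalently, the grid scale) is a detail the paper leaves implicit, but it does not change the argument.
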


\begin{remark}
Theorem \ref{thm:kolmogorov-FR-abelian} shows that a Fourier-ratio bound controls the algorithmic
description length of a signal on $G$ at fixed accuracy. In the setting of a decomposition
$G=H\oplus K$, the localization obstruction in Section~\ref{sec:localization} indicates that naive
slice-by-slice localization can inflate the effective Fourier-ratio complexity by a factor comparable
to $|K|$. This is consistent with the bound in Theorem \ref{thm:kolmogorov-FR-abelian}, where the
dominant term scales like $k\log|G|$ with $k$ proportional to the square of the relevant Fourier-ratio
parameter.
\end{remark}

\begin{corollary}[Algorithmic description length for the Gabor Fourier-ratio class]
\label{cor:kolmogorov-gabor-class}
Let $N,T\ge 1$ and let $G={\mathbb Z}_N\times{\mathbb Z}_T$. Fix an orthonormal Gabor basis
${\mathcal G}=\{g_{m,a}\}_{(m,a)\in{\mathbb Z}_N\times{\mathbb Z}_T}$ for ${\mathbb C}^G$, and for
$f:G\to{\mathbb C}$ define
$$
FR_{\mathcal G}(f)=\frac{\|c_{\mathcal G}(f)\|_1}{\|c_{\mathcal G}(f)\|_2}.
$$
Fix $\epsilon\in(0,1)$ and $r>0$. If $f:G\to{\mathbb C}$ is nonzero and satisfies
$$
FR_{\mathcal G}(f)\le r,
$$
then there exist absolute constants $C_0,C_1>0$ and a constant $C_U>0$ depending only on the
universal machine $U$ such that
$$
K_U(f,\epsilon)
\le
C_0\frac{r^2}{\epsilon^2}\log\Bigl(\frac{r}{\epsilon}\Bigr)^2\log(NT)
+
C_1\frac{r^2}{\epsilon^2}\log\Bigl(\frac{r}{\epsilon}\Bigr)^2\log\Bigl(\frac{1}{\epsilon}\Bigr)
+
C_U.
$$
In particular, for fixed $\epsilon$, every function in the class
$$
{\mathcal B}(r)=\{f:{\mathbb Z}_N\times{\mathbb Z}_T\to{\mathbb C}: FR_{\mathcal G}(f)\le r,\ f\ne 0\}
$$
admits an $\epsilon$-accurate description whose length grows at most on the order of
$\frac{r^2}{\epsilon^2}\log(\frac{r}{\epsilon})^2\log(NT)$.
\end{corollary}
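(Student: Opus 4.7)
The approach is to mirror the proof of Theorem \ref{thm:kolmogorov-FR-abelian} but with the Fourier analysis and synthesis operators on $G$ replaced by the Gabor analysis and synthesis operators associated with $\mathcal G$. The key structural fact, playing the role of Parseval in the Fourier setting, is that $\mathcal G$ is orthonormal, so that for every $g\in{\mathbb C}^G$,
$$
\|g-f\|_2 \;=\; \|c_{\mathcal G}(g)-c_{\mathcal G}(f)\|_2.
$$
Consequently, it is enough to construct a program that outputs a coefficient vector $\tilde c$ satisfying $\|\tilde c-c_{\mathcal G}(f)\|_2\le\epsilon\|f\|_2$ and then applies Gabor synthesis; the resulting $f^\ast$ will automatically satisfy $\|f^\ast-f\|_2\le\epsilon\|f\|_2$.

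\textbf{Construction of the program.} Write $c:=c_{\mathcal G}(f)$. Applying the refined form of Lemma \ref{lem:FR-sparse} (the sharpening flagged in the remark after that lemma and used in the proof of \cite[Theorem 1.21]{FRdiscrete}) to the hypothesis $\|c\|_1\le r\|c\|_2$ produces a support set $S\subset{\mathbb Z}_N\times{\mathbb Z}_T$ of size
$$
s\;=\;O\!\left(\frac{r^2}{\epsilon^2}\log\!\Bigl(\frac{r}{\epsilon}\Bigr)^{2}\right)
$$
for which $\|c-c_S\|_2\le(\epsilon/2)\|c\|_2$. Quantize the real and imaginary parts of each entry of $c_S$ on a uniform grid of spacing $\sim(\epsilon/\sqrt{s})\|c\|_2$ to produce $\tilde c$ with $\|\tilde c-c_S\|_2\le(\epsilon/2)\|c\|_2$; since $|c_j|\le\|c\|_2$ for every $j$, each retained coordinate uses $O(\log(\sqrt{s}/\epsilon))$ bits. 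The full program consists of a fixed universal preamble of length $C_U$ (hard-coding the parser, the canonical rule assigning $\mathcal G$ to the pair $(N,T)$, and the synthesis routine) followed by: (i) the pair $(N,T)$ encoded in $O(\log(NT))$ bits, (ii) the subset $S$ encoded in $\lceil\log_2\binom{NT}{s}\rceil = O(s\log(NT/s))$ bits, and (iii) the $s$ quantized complex values in $O(s\log(\sqrt{s}/\epsilon))$ bits. The triangle inequality together with the Parseval identity above then gives $\|f^\ast-f\|_2\le\epsilon\|f\|_2$.

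\textbf{Bookkeeping and main obstacle.} Substituting $s=O((r^2/\epsilon^2)\log(r/\epsilon)^2)$ and using $\log s = O(\log(r/\epsilon))$, the support cost $s\log(NT/s)$ is absorbed into $s\log(NT)$ and yields the first term $C_0\,(r^2/\epsilon^2)\log(r/\epsilon)^2\log(NT)$. The quantization cost decomposes as $s\log(\sqrt{s}/\epsilon)=s\bigl[\tfrac12\log s+\log(1/\epsilon)\bigr]$, whose dominant $s\log(1/\epsilon)$ piece contributes the second term $C_1\,(r^2/\epsilon^2)\log(r/\epsilon)^2\log(1/\epsilon)$, with the residual $s\log s$ absorbed into the same term at the cost of a larger constant. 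The only genuine obstacle is precisely this delicate tracking of logarithmic factors—routine but unavoidable—so that the quantization cost appears with $\log(1/\epsilon)$ rather than an unnecessarily enlarged $\log(r/\epsilon)$ factor. The conceptual content of the proof is the Parseval reduction in the first paragraph, which swaps the specific role of the Fourier basis in Theorem \ref{thm:kolmogorov-FR-abelian} for that of any incoherent orthonormal system such as $\mathcal G$.
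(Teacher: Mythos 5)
Your proposal is correct and follows essentially the same route as the paper: the paper proves the corollary by invoking the proof of Theorem~\ref{thm:kolmogorov-FR-abelian} with the orthonormal Gabor basis in place of the Fourier basis, noting that the sparsification-plus-quantization argument uses only orthonormality (Parseval) of the coefficient expansion and $|G|=NT$. You simply spell out explicitly what the paper compresses into ``applies verbatim,'' including the encoding of $S$ and the quantized coefficients and the logarithmic bookkeeping, so there is nothing substantively different to flag.
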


\begin{proof}
Apply Theorem \ref{thm:kolmogorov-FR-abelian} to the finite abelian group
$G={\mathbb Z}_N\times{\mathbb Z}_T$ using the orthonormal basis ${\mathcal G}$ in place of the
Fourier basis. Since ${\mathcal G}$ is an orthonormal basis of ${\mathbb C}^G$, the same
sparsification and quantization argument applies verbatim to the coefficient vector
$c_{\mathcal G}(f)$, and $|G|=NT$.
\end{proof}

\subsection{Statistical Query Dimension}

\begin{definition}
For a concept class, $\mathcal{C} \subset \{-1,1\}^X$, and probability distribution, $\mathcal{D}$, on $X$, the statistical query
dimension of $\mathcal{C}$ with respect to $\mathcal{D}$ is the largest number $d$ such that $\mathcal{C}$ contains $d$
functions $f_1, f_2, . . . , f_d$ such that for all $i \not = j$,
$$
\left |\mathbb{E}_{x \sim \mathcal{D}}
\left
[f_i(x) \cdot f_j(x)
\right
] \right| \leq \frac{1}{d}.
$$
The Statistical Query Dimension of $\mathcal{C}$ is the maximum of the statistical query
dimension of $\mathcal{C}$ with respect to $\mathcal{D}$ over all $\mathcal{D}$.
\end{definition}

\begin{definition}
Let $G$ be a finite group with $M = |G|, $ and let $r > 0$. Then we define
$$
\mathcal{B}(r) = \{f:G \to \{-1, 1\} : FR_{\mathcal B}(f) < r\},
$$
where $FR_{\mathcal B}(f)$ is the Fourier with respect to the orthonormal basis $\{\phi_j\}_{j \in G}$.
\end{definition}

\begin{remark}
In the proof of Theorem~\ref{thm:gaborstatdim}, the constants are not optimized. One could perhaps get a tighter upper bound on the statistical query dimension of $\mathcal{B}(r)$ by optimizing the constants more carefully.
\end{remark}

\begin{theorem}
\label{thm:gaborstatdim}
The statistical query dimension of $\mathcal{B}(r)$ is bounded above by
$$
M^{16^2 M\tau^2 r^2} (16^3\tau^3 M\ r^2 + 1) (16\tau r \sqrt{M})^{16^2 M\tau^2 r^2}
$$
where
$$
\tau = \max_{x \in G} \max_{j \in G} |\phi_j(x)|.
$$
\end{theorem}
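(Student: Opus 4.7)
The plan is to convert the Fourier-ratio bound into an explicit $\ell^\infty$ net of $\mathcal B(r)$ and then bound the SQ dimension by a pigeonhole argument against the near-orthogonality condition in the definition. Because the statistical query dimension is a supremum over distributions $\mathcal D$ on $G$, and because $\|h\|_{L^2(\mathcal D)}\le\|h\|_\infty$ for every $h$, a single $\ell^\infty$ net is uniformly valid across all $\mathcal D$, which sidesteps any dependence of the argument on the particular distribution and lets us work with the basis $\mathcal B$ intrinsically.

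For the net construction, I would expand each $f\in\mathcal B(r)$ in the basis $\mathcal B$ with coefficient vector $c(f)$ satisfying $\|c(f)\|_2=\sqrt M$ and $\|c(f)\|_1<r\sqrt M$, then apply Lemma~\ref{lem:FR-sparse} with a parameter $\eta$ to extract an $s$-sparse truncation $\tilde c(f)$ with $\|c(f)-\tilde c(f)\|_2\le\eta\sqrt M$ and $s=\lceil r^2/\eta^2\rceil$. The inequality $\|f-\tilde f\|_\infty\le\|c(f)-\tilde c(f)\|_2$ follows from Cauchy--Schwarz and the orthonormal-basis Parseval identity $\sum_j|\phi_j(x)|^2=1$. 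Next I would round the $s$ retained coefficients to a $\delta$-grid inside the complex box of radius $r\sqrt M$, using $|\tilde c_j|\le\sqrt M$ sharpened by the Lemma's bound $|c_j|\le r\sqrt M/j$ on ordered coefficients. This produces roughly $r\sqrt M/\delta$ levels per real coordinate and contributes an $\ell^\infty$ quantization error of order $\tau s\delta$ via the uniform basis bound $|\phi_j(x)|\le\tau$. Balancing the two budgets $\eta\sqrt M$ and $\tau s\delta$ against the target accuracy $\tfrac12$ forces $\eta\sim 1/(\tau\sqrt M)$ and $\delta\sim 1/(\tau s)$; this calibration produces $s\sim 16^2M\tau^2r^2$ and a grid-level count of order $16\tau r\sqrt M$, matching the exponents in the stated bound.

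The total number of quantized sparse approximations is at most $\binom{M}{s}(16\tau r\sqrt M)^s\le M^{16^2M\tau^2r^2}(16\tau r\sqrt M)^{16^2M\tau^2r^2}$. The extra middle factor $16^3\tau^3Mr^2+1$ should arise from an auxiliary integer-valued discretization of either the effective sparsity or of the quantized $\ell^1$ budget, so that the count of $(\text{support},\text{quantized coefficients},\text{sparsity level})$ triples is precisely the displayed product. With the net in hand, suppose the SQ dimension is $d\ge 2$ witnessed by $f_1,\dots,f_d\in\mathcal B(r)$ under some $\mathcal D$. Since the $f_i$ take values in $\{-1,1\}$ we have $\|f_i\|_{L^2(\mathcal D)}=1$, and $|\langle f_i,f_j\rangle_{\mathcal D}|\le 1/d$ implies $\|f_i-f_j\|_{L^2(\mathcal D)}^2\ge 2(1-1/d)\ge 1$, so two distinct $f_i$ and $f_j$ cannot both lie within $\tfrac12$ of the same net element. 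The sparsify-and-quantize map is therefore injective on $\{f_1,\dots,f_d\}$, and $d$ is bounded by the net cardinality, which is the stated quantity.

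The main obstacle is the precise calibration of $\eta$ and $\delta$ together with the identification of the auxiliary discretization responsible for the middle factor $16^3\tau^3Mr^2+1$: a bare sparsify-plus-quantize template delivers only a product of the form $M^s\cdot L^s$, and extracting exactly the stated constants, while simultaneously certifying that the accumulated $\ell^\infty$ error stays below the pigeonhole threshold $\tfrac12$ across both error sources and all legal ranges of $\tau$ and $r$, is the most delicate bookkeeping step.
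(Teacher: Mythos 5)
Your overall architecture (build a distribution-independent net of approximants, then use the $\pm1$-valued near-orthogonality to show the approximation map is injective on any SQ-witness family) matches the paper's packing argument. But the net construction has a genuine gap: the deterministic sparsify-and-quantize route via Lemma~\ref{lem:FR-sparse} cannot deliver the sparsity level $k\sim 16^2 M\tau^2 r^2$ that appears in the exponent of the stated bound, and your calibration is internally inconsistent. Your own error estimate is $\|f-\tilde f\|_\infty\le\|c(f)-\tilde c(f)\|_2\le\eta\sqrt M$ (via Cauchy--Schwarz and $\sum_j|\phi_j(x)|^2=1$), which contains no $\tau$; to push it below the pigeonhole threshold you must take $\eta\lesssim M^{-1/2}$, hence $s\gtrsim Mr^2$, not $M\tau^2 r^2$. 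If instead you set $\eta\sim 1/(\tau\sqrt M)$ as you propose, the resulting $\ell^\infty$ error is of order $1/\tau\ge 1$ (since $\tau\le 1$, and $\tau\ge M^{-1/2}$), which exceeds the threshold $1/2$, so the pigeonhole step fails. In the canonical incoherent case $\tau=M^{-1/2}$ (Fourier, Walsh--Hadamard) the theorem's exponent is $16^2r^2$, essentially independent of $M$, while a hard truncation of the largest coefficients genuinely needs on the order of $Mr^2$ terms to control the error uniformly over all distributions; so your approach proves a strictly weaker bound, not the stated one. The unexplained middle factor is a symptom of the same mismatch: in the paper it is the grid for a \emph{single common amplitude}, not an auxiliary discretization of the sparsity level.

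The paper avoids this by using a Maurey-type probabilistic sparsification rather than deterministic truncation: it samples indices $Y$ with probability $|g(m)|/\|g\|_1$ and forms $Z(x)=\|g\|_1\,\psi_{x,Y}\,g(Y)/|g(Y)|$, so that the average of $k$ i.i.d.\ copies has mean $f$ and, for \emph{each fixed} distribution $D$, mean-squared $L^2(D)$ error at most $\tau^2\|g\|_1^2/k\le M\tau^2 r^2/k$. This is exactly where the factor $M\tau^2 r^2$ comes from, and it requires no $\ell^\infty$ control at all (the net is fixed, but which net element approximates a given $f$ may depend on $D$, which is all the covering/packing argument needs). Moreover, every sampled term carries the same amplitude $\|g\|_1/k$ and a unit-modulus phase, so the discretization counts as $M^k(N_1+1)N_2^k$: $M^k$ index choices, $N_1+1$ grid values for the one shared amplitude (this is the factor $16^3\tau^3Mr^2+1$, since $N_1=16\tau k$), and $N_2^k$ phase choices with $N_2=16\tau r\sqrt M$. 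Your per-coefficient complex quantization of a truncated expansion would produce a count of the form $M^s(\text{levels})^{2s}$ with the larger $s\sim Mr^2$, which does not match the stated quantity. To prove the theorem as stated you would need to replace the truncation step by the random (empirical) $k$-term representation, or some other mechanism that yields $k\sim M\tau^2r^2$ terms with shared amplitude.
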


\begin{corollary}[A readable upper bound]
\label{cor:sq-readable}
There exists an absolute constant $C>0$ such that the statistical query dimension of $\mathcal{B}(r)$ satisfies
$$
\mathrm{SQdim}(\mathcal{B}(r)) \le \exp\Big(C\,M\tau^2 r^2\,\log\big(M\tau r\big)\Big),
$$
where $M=|G|$ and $\tau = \max_{x\in G}\max_{j\in G}|\phi_j(x)|$. In particular, for fixed $\tau$ and $r$ this grows at most exponentially in $M \log(M)$.
\end{corollary}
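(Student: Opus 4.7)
The plan is to derive Corollary~\ref{cor:sq-readable} directly from the bound in Theorem~\ref{thm:gaborstatdim} by passing to logarithms and absorbing lower-order terms. Since the statement is an ``exponential of $O(\cdot)$'' bound, no new combinatorial or probabilistic input is needed; the work is purely algebraic simplification of the three-factor product in Theorem~\ref{thm:gaborstatdim}.

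To carry this out, I would introduce the shorthand $k = 16^2 M \tau^2 r^2$, so the bound reads
$$
M^k \cdot (16^3\tau^3 M r^2 + 1) \cdot (16\tau r \sqrt{M})^k.
$$
I would then take the logarithm factor by factor. The first factor contributes $\log M^k = k\log M$. For the middle factor I would use the elementary bound $\log(16^3\tau^3 M r^2 + 1) \le c_1\log(M\tau r) + c_2$ for some absolute constants $c_1,c_2$. For the third factor I would expand
$$
\log\bigl((16\tau r\sqrt{M})^k\bigr) = k\bigl(\log 16 + \log \tau + \log r + \tfrac{1}{2}\log M\bigr).
$$
Summing the three contributions yields a total of the form $k \cdot O(\log(M\tau r)) + O(\log(M\tau r))$, which is exactly the desired form once $k$ is re-expanded.

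The only step requiring care is the absorption of the additive middle contribution $c_1\log(M\tau r)$ into the dominant term $k\cdot O(\log(M\tau r))$. For this I would invoke two elementary lower bounds on the parameters. First, Parseval's identity $\sum_x |\phi_j(x)|^2 = 1$ forces $\tau \ge M^{-1/2}$, so $M\tau^2 \ge 1$. Second, by Cauchy--Schwarz one always has $\|c\|_1 \ge \|c\|_2$, so $r = \|c\|_1/\|c\|_2 \ge 1$ for any nonzero coefficient vector. Together these give $k \ge 16^2$, so the middle term is truly lower order and can be absorbed into the main exponent at the cost of enlarging the absolute constant $C$.

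No step is a genuine obstacle; the only bookkeeping concern is the regime where $M\tau r$ is close to $1$, in which case $\log(M\tau r)$ might in principle be small. However, the lower bounds above yield $M\tau r \ge \sqrt{M}$, hence $\log(M\tau r) \ge \tfrac{1}{2}\log M$, which is positive and suffices to make the final bound meaningful. Assembling these observations produces the stated inequality for some absolute $C$, completing the plan.
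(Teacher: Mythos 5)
Your proposal is correct and matches the paper's (implicit) route: the paper offers no separate argument for Corollary~\ref{cor:sq-readable}, treating it as immediate from Theorem~\ref{thm:gaborstatdim} by taking logarithms, which is exactly your bookkeeping. Your use of $\tau\ge M^{-1/2}$ (from orthonormality) and $r\ge 1$ (Cauchy--Schwarz, else $\mathcal{B}(r)$ is empty) to guarantee $\log(M\tau r)\ge\tfrac12\log M$ and absorb the lower-order factors is the right way to make that reduction airtight.
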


\section{Proofs of the main results}
\label{sec:proofs}

\subsection{Proof of Theorem \ref{thm:gabor-frames-18}}

Theorem \ref{thm:gabor-frames-18} is a previously known result and is stated here only for context.
We refer the reader to \cite[Section 1]{GaborFrames} for the full proof.

\subsection{Proof of Theorem \ref{thm:FR121}}

Theorem \ref{thm:FR121} is \cite[Theorem 1.21]{FRdiscrete}. We refer the reader to \cite[Section 7]{FRdiscrete}
for the complete proof, including the explicit numerical constant $11.47$.

\subsection{Proof of Theorem \ref{thm:FR-ONB}}

We give a complete proof in the same spirit as the proof of Theorem 1.21 in \cite{FRdiscrete}.
The only difference is that we quote standard compressive sensing facts for bounded orthonormal systems,
as in \cite{FoucartRauhut}, instead of quoting the corresponding special case for Fourier matrices.

Let ${\mathcal B}=\{\phi_j\}_{j\in D}$ be as in the statement and define the unitary map
$$
A:{\mathbb C}^D\to{\mathbb C}^D,
\qquad
Af=c_{\mathcal B}(f)=\{\langle f,\phi_j\rangle\}_{j\in D}.
$$
Since ${\mathcal B}$ is an orthonormal basis, $A$ is unitary, so $\|Af\|_2=\|f\|_2$ for every $f$.
The hypothesis $FR_{\mathcal B}(f)\le r$ is therefore
$$
\|Af\|_1\le r\|f\|_2.
$$

Let $X\subset D$ be the random sample set. Write $P_X:{\mathbb C}^D\to{\mathbb C}^D$ for the coordinate
projection that keeps entries on $X$ and sets all other entries to $0$. The constraint
$\|g-f\|_{\ell^2(X)}\le \epsilon\|f\|_2$ is the same as
$$
\|P_X(g-f)\|_2\le \epsilon\|f\|_2.
$$
The optimization problem in Theorem \ref{thm:FR-ONB} can thus be written as
$$
\min_g \|Ag\|_1
\quad\text{subject to}\quad
\|P_X(g-f)\|_2\le \epsilon\|f\|_2.
$$

The first step is to turn the $\ell^1$ control into a quantitative sparse approximation statement for
the coefficient vector $Af$. Write $c=Af\in{\mathbb C}^D$. Since $A$ is unitary, the desired approximation
in signal space is equivalent to an approximation of $c$ in coefficient space.

One can obtain a first approximation using Lemma \ref{lem:FR-sparse}. However, just as in the proof of \cite[Theorem 1.21]{FRdiscrete}, using Theorem 4.5 \cite{HavivRegev} this can be refined with the result that there exists a subset $S\subset D$ with
$$
|S|\le C_0\frac{r^2}{\epsilon^2}\log\Bigl(\frac{r}{\epsilon}\Bigr)^2 \log(M)
$$
such that, if $c_S$ is the truncation of $c$ to $S$, then
$$
\|c-c_S\|_2\le \epsilon\|c\|_2.
$$
Define the $|S|$-term approximation of $f$ by
$$
f_S=A^*c_S.
$$
Since $A$ is unitary, $\|f-f_S\|_2=\|c-c_S\|_2\le \epsilon\|f\|_2$.

The second step is a restricted isometry property for the measurement operator associated to random
sampling. Consider the sensing map acting on coefficient vectors,
$$
{\mathcal M}=P_XA^*:{\mathbb C}^D\to{\mathbb C}^D.
$$
The boundedness hypothesis
$$
\max_{x\in D}\max_{j\in D}|\phi_j(x)|\le M^{-\frac{1}{2}}
$$
is exactly the bounded orthonormal system hypothesis in compressive sensing. A standard result (see e.g. \cite[Chapter 12]{FoucartRauhut}) states that
if $|X|$ is at least a constant multiple of $|S|\log(|S|)^2\log M$, then with probability at least
$1-\exp(-c|X|)$ the matrix ${\mathcal M}$ satisfies a restricted isometry property of order $|S|$.

Since $|X|$ is binomial with mean $pM$, the hypothesis
$$
pM \ge C \frac{r^2}{\epsilon^2}\log\Bigl(\frac{r}{\epsilon}\Bigr)^2 \log M
$$
ensures, after adjusting constants, that $|X|$ is large enough for the restricted isometry property to hold
with probability at least $1-\exp(-c pM)$. One may justify the reduction from $pM$ to $|X|$ by a standard tail
estimate for sums of independent bounded random variables, for example Hoeffding's inequality \cite{Hoeffding63}.

The final step is stability of $\ell^1$ minimization for approximately sparse vectors under restricted isometry.
Once ${\mathcal M}$ has the restricted isometry property of order $|S|$, the standard compressive sensing recovery
theorem implies that the solution $f^*$ of
$$
\min_g \|Ag\|_1
\quad\text{subject to}\quad
\|P_X(g-f)\|_2\le \epsilon\|f\|_2
$$
satisfies
$$
\|f^*-f\|_2\le C_1\epsilon\|f\|_2,
$$
with an absolute constant $C_1$. This is proved in many places, including \cite[Chapters 4 and 6]{FoucartRauhut}.
In \cite{FRdiscrete} the constants are tracked through the same argument and the explicit numerical value
$C_1=11.47$ is recorded. Since the present proof follows the same chain of implications, we may take the same
constant here. This completes the proof.

\subsection{Proof of Theorem \ref{thm:obstruction}}

\begin{proof}
Let $\widehat f$ denote the two-dimensional Fourier transform of $f$ on
${\mathbb Z}_N\times{\mathbb Z}_T$, normalized so that Parseval holds. For each
$a\in{\mathbb Z}_T$, let $f_a(t)=f(t,a)$ and let $\widehat{f_a}$ denote the
one-dimensional Fourier transform of $f_a$ on ${\mathbb Z}_N$, again with Parseval
normalization. With these conventions, the two-dimensional transform may be viewed
row-wise and we have
$$
\widehat f(m,a)=\widehat{f_a}(m).
$$

We compare $\ell^1$ and $\ell^2$ norms across the $a$ variable. First,
$$
\|\widehat f\|_1
=
\sum_{a\in{\mathbb Z}_T}\sum_{m\in{\mathbb Z}_N}|\widehat f(m,a)|
=
\sum_{a\in{\mathbb Z}_T}\sum_{m\in{\mathbb Z}_N}|\widehat{f_a}(m)|
=
\sum_{a\in{\mathbb Z}_T}\|\widehat{f_a}\|_1.
$$
Applying Cauchy Schwarz in the index $a$ gives
$$
\|\widehat f\|_1
\le
\sqrt{T}\left(\sum_{a\in{\mathbb Z}_T}\|\widehat{f_a}\|_1^2\right)^{\frac12}.
$$

Next, Parseval for the two-dimensional transform and the orthogonality of the
row decomposition imply
$$
\|\widehat f\|_2^2
=
\sum_{a\in{\mathbb Z}_T}\sum_{m\in{\mathbb Z}_N}|\widehat f(m,a)|^2
=
\sum_{a\in{\mathbb Z}_T}\sum_{m\in{\mathbb Z}_N}|\widehat{f_a}(m)|^2
=
\sum_{a\in{\mathbb Z}_T}\|\widehat{f_a}\|_2^2.
$$
Therefore
$$
\|\widehat f\|_2
=
\left(\sum_{a\in{\mathbb Z}_T}\|\widehat{f_a}\|_2^2\right)^{\frac12}.
$$

Now set
$$
R=\max_{a\in{\mathbb Z}_T} FR(f_a)
=
\max_{a\in{\mathbb Z}_T}\frac{\|\widehat{f_a}\|_1}{\|\widehat{f_a}\|_2}.
$$
Then for every $a$ we have $\|\widehat{f_a}\|_1\le R\,\|\widehat{f_a}\|_2$, hence
$$
\left(\sum_{a\in{\mathbb Z}_T}\|\widehat{f_a}\|_1^2\right)^{\frac12}
\le
R\left(\sum_{a\in{\mathbb Z}_T}\|\widehat{f_a}\|_2^2\right)^{\frac12}
=
R\,\|\widehat f\|_2.
$$
Substituting into the earlier bound for $\|\widehat f\|_1$ yields
$$
\|\widehat f\|_1
\le
\sqrt{T}\,R\,\|\widehat f\|_2.
$$
Dividing by $\|\widehat f\|_2$ gives
$$
FR(f)=\frac{\|\widehat f\|_1}{\|\widehat f\|_2}
\le
\sqrt{T}\,\max_{a\in{\mathbb Z}_T} FR(f_a).
$$
Rearranging completes the proof, i.e.
$$
\max_{a\in{\mathbb Z}_T} FR(f_a)\ge \frac{FR(f)}{\sqrt{T}}.
$$
\end{proof}

\subsection{Proof of Theorem \ref{thm:kolmogorov-FR-abelian}}

The first ingredient is the same soft sparsification mechanism used in the proof of the global
Fourier-ratio recovery theorem. A refinement of Lemma \ref{lem:FR-sparse} using Theorem 4.5 in \cite{HavivRegev} yields the following statement: there exists a subset $S\subset\widehat G$ with
$$
|S|\le C_2\frac{r^2}{\epsilon^2}\log\Bigl(\frac{r}{\epsilon}\Bigr)^2 \log|G|
$$
such that if $\widehat f_S$ denotes the truncation of $\widehat f$ to $S$ and $f_S$ is the inverse
Fourier transform of $\widehat f_S$, then
$$
\|f-f_S\|_2\le \frac{\epsilon}{4}\|f\|_2.
$$
This is exactly the same approximation step that underlies \cite{FRdiscrete}, and it uses only
Parseval and the hypothesis $\|\widehat f\|_1\le r\|\widehat f\|_2$.

To turn this approximation into a finite description, we quantize the Fourier coefficients on $S$.
Let $k=|S|$ and enumerate $S=\{\gamma_1,\dots,\gamma_k\}$. For each $j$, write
$\widehat f(\gamma_j)=a_j+ib_j$ with $a_j,b_j\in{\mathbb R}$. Choose a quantization step size
$$
\delta=\frac{\epsilon}{4k^{1/2}}\|\widehat f\|_2.
$$
Let $\widetilde a_j$ and $\widetilde b_j$ be real numbers obtained by rounding $a_j$ and $b_j$ to a
nearest multiple of $\delta$, and define the quantized coefficient vector by
$$
\widetilde{\widehat f}(\gamma)=
\begin{cases}
\widetilde a_j+i\widetilde b_j & \text{if}\ \gamma=\gamma_j\ \text{for some}\ j,\\
0 & \text{if}\ \gamma\notin S.
\end{cases}
$$
Let $\widetilde f$ be the inverse Fourier transform of $\widetilde{\widehat f}$.

By construction,
$$
\|\widehat f_S-\widetilde{\widehat f}\|_2^2
=
\sum_{j=1}^k |(a_j-\widetilde a_j)+i(b_j-\widetilde b_j)|^2
\le
\sum_{j=1}^k (2\delta)^2
=
4k\delta^2,
$$
so
$$
\|\widehat f_S-\widetilde{\widehat f}\|_2\le 2k^{1/2}\delta=\frac{\epsilon}{2}\|\widehat f\|_2.
$$
By Parseval, $\|f_S-\widetilde f\|_2=\|\widehat f_S-\widetilde{\widehat f}\|_2$, hence
$$
\|f-\widetilde f\|_2
\le
\|f-f_S\|_2+\|f_S-\widetilde f\|_2
\le
\frac{\epsilon}{4}\|f\|_2+\frac{\epsilon}{2}\|f\|_2
\le
\epsilon\|f\|_2.
$$
Thus it suffices to upper bound the number of bits needed to describe $\widetilde f$.

A self-delimiting description of $\widetilde f$ consists of the following data together with a
fixed decoder program, whose length contributes an additive constant $C_U$.
First, list the characters $\gamma_1,\dots,\gamma_k$, which costs at most $C_3 k\log|G|$ bits by the
encoding convention for $\widehat G$. Second, record the quantized real and imaginary parts
$\widetilde a_j,\widetilde b_j$. Since each is an integer multiple of $\delta$, it suffices to record
the corresponding integers. The magnitudes satisfy
$$
|\widetilde a_j|+|\widetilde b_j|\le 2|\widehat f(\gamma_j)|+2\delta,
$$
so the required integer ranges are controlled by $\|\widehat f\|_2$ and $\delta$. This yields an
encoding cost bounded by
$$
C_4 k \log\Bigl(\frac{r}{\epsilon}\Bigr)
+
C_5 k,
$$
for absolute constants $C_4,C_5>0$.

Combining these estimates, we obtain
$$
K_U(f,\epsilon)
\le
C_6 k\log|G|+C_7 k\log\Bigl(\frac{r}{\epsilon}\Bigr)+C_U,
$$
and substituting
$$
k\le C_2\frac{r^2}{\epsilon^2}\log\Bigl(\frac{r}{\epsilon}\Bigr)^2
$$
gives the stated bound.

\subsection{Proof of Theorem \ref{thm:gaborstatdim}}

For $f \in B(r)$ we let $g = f_{\mathcal{B}}$. Note that
$$
\|g\|_2 = \|f\|_2 = \sqrt{M} \Rightarrow FR_{\mathcal{B}}(f) = \frac{\|g\|_1}{\sqrt{M}}.
$$

Note that for $f \in \mathbb{C}^G$,
$$
f(x) = \sum_{m \in G} \psi_{x,m} \cdot g(m)
$$
where $\{\psi_{x,m}\}$ represents the transformation from $\mathcal{B}$ to the standard basis on $G$. So, we let $Y$ be a random variable on $G$ such that
$$
Pr(Y = m) = \frac{|g(m)|}{\|g\|_1}
$$
and define a random function
$$
Z(x) = \|g\|_1 \psi_{x,Y} \frac{g(Y)}{|g(Y)|}
$$
so that for all $x \in G$, $E[Z(x)] = f(x).$ If we take $Z_1, ..., Z_k$ IID random functions with this distribution we get can use their mean as an estimate of $f$ and compute their mean-squared error. For any distribution, $D$, on $G$ with probability mass function $p$ we get
\begin{align*}
    E_{Z_1, ..., Z_k}\left[ E_D\left[ \left( f(x) - \frac{1}{k} \sum_{i = 1}^{k} Z_i(x) \right)^2 \right] \right]
    & = \sum_{x \in G} p(x) E_{Z_1, ..., Z_k}\left[ \left( f(x) - \frac{1}{k} \sum_{i = 1}^{k} Z_i(x) \right)^2 \right] \\
    & =  \sum_{x \in G} p(x) Var\left[ \frac{1}{k} \sum_{i = 1}^{k} Z_i(x) \right] \\
    & =  \sum_{x \in G} \frac{p(x)}{k} Var\left[ Z(x) \right]
\end{align*}

The modulus of this is bounded by
\begin{align*}
    \sum_{x \in G} \frac{p(x)}{k} E_p\left[ |Z(x)|^2 \right]
    & = \sum_{x \in G} \frac{p(x)}{k} \sum_{m} \frac{|g(m)|}{\|g\|_1} \|g\|_1^2 \cdot |\psi_{x, m}|^2 \\
    & \leq \frac{\tau^2}{k} \|g\|_1^2 \\
    & = \frac{M \tau^2 \cdot FR_{\mathcal{B}}(f)^2}{k} \\
    & \leq \frac{M \tau^2 r^2}{k}.
\end{align*}

For any $\varepsilon$, we can get the MSE $< \varepsilon^2$ by making $k \geq M \tau^2 r^2/\varepsilon^2$. This means that for any $f \in B(r)$ there is a degree $k$ ``polynomial'' of the form
$$
P(x) = \frac{\|g\|_1}{k} \sum_{i = 1}^{k} \psi_{x, m_i} \frac{g(m_i)}{|g(m_i)|}
$$
with $\|f - P\|_{L^2(D)} \leq \varepsilon$. We will now proceed to estimate these $P$ with a finite set of polynomials. Note that $0 \leq \|g\|_1 \leq r\sqrt{M}$ and the $g(m_i)/|g(m_i)|$ are on the unit circle. So, for $N_1, N_2 \in \mathbb{N}$ we we will approximate such $P$ with polynomials of the form
$$
\tilde{P}(x) = A \sum_{i = 1}^{k} \psi_{x, m_i} u_i
$$
where
$$
A \in \left\{ 0, \frac{r\sqrt{M}}{kN_1}, \frac{2r\sqrt{M}}{kN_1}, ..., \frac{r\sqrt{M}}{k} \right\}
$$
and the $u_i$ are the $N_2$ roots of unity. For any $P$, we take the $\tilde{P}$ with the same $m_i$ and where $A$ and the $u_i$ are as close as possible to to the corresponding coefficients in $P$. Since $\|P - \tilde{P}\|_{L_2(D)} \leq \|P - \tilde{P}\|_{\infty}$ we take arbitrary $x \in G$ and note that
\begin{align*}
    \left|P(x) - \tilde{P}(x)\right|
    &= \left| \frac{\|g\|_1}{k} \sum_{i = 1}^{k} \psi_{x, m_i} \frac{g(m_i)}{|g(m_i)|} - A \sum_{i = 1}^{k} \psi_{x, m_i} u_i\right| \\
    & \leq \sum_{i = 1}^{k} \left| \psi_{x, m_i} \right| \cdot \left| \frac{\|g\|_1}{k} \frac{g(m_i)}{|g(m_i)|} - A u_i\right| \\
    & \leq \sum_{i = 1}^{k} \left| \psi_{x, m_i} \right| \cdot \left( \left| \frac{\|g\|_1}{k} \right| \frac{1}{N_2} + \frac{1}{N_1} + \frac{1}{N_1 N_2} \right) \\
    & \leq \tau k \left( \left| \frac{r \sqrt{M}}{k} \right| \frac{1}{N_2} + \frac{1}{N_1} + \frac{1}{N_1 N_2} \right).
\end{align*}

So, by the triangle inequality we get that
$$
\left\| P - \tilde{P} \right\|_{L^2(D)}
\leq \varepsilon + \tau k \left( \left| \frac{r \sqrt{M}}{k} \right| \frac{1}{N_2} + \frac{1}{N_1} + \frac{1}{N_1 N_2} \right)
$$
which we denote $\rho$. The number of possible $\tilde{P}$ is bounded above by
$$
M^k (N_1 + 1) N_2^k
$$
which we denote $d$. This means that the covering number is bounded by
$$
\mathcal{N}\left(B(r), \| \cdot \|_{L^2(D)}, \rho\right) \leq d
$$
which means that the packing number is bounded by
$$
\mathcal{P}\left(B(r), \| \cdot \|_{L^2(D)}, 4\rho\right) \leq d.
$$

Thus, if we take $f_1, ..., f_d \in B(r)$ there is some $i \neq j$ such that
$$
\|f_i - f_j\|_{L^2(D)} \leq 4\rho
$$
$$
\therefore E_{x \sim D}[f_i(x) \cdot f_j(x)] \geq \frac{2 - 16\rho^2}{2}.
$$

Setting this $> 1/2 > 1/d$ we get that we need
$$
\rho = \varepsilon + \tau k \left( \left| \frac{r \sqrt{M}}{k} \right| \frac{1}{N_2} + \frac{1}{N_1} + \frac{1}{N_1 N_2} \right) \leq \frac{1}{4}.
$$

We can achieve this by setting each term $< 1/16$ which means we want
$$
\varepsilon \geq \frac{1}{16} \Rightarrow k \geq 16^2 M\tau^2 r^2
$$
$$
N_2\geq 16\tau r \sqrt{M}, \text{ and }
$$
$$
N_1 \geq 16\tau k.
$$

This, we can bound the statistical query dimension by
$$
d = M^{16^2 M\tau^2 r^2} (16^3\tau^3 M\ r^2 + 1) (16\tau r \sqrt{M})^{16^2 M\tau^2 r^2}.
$$

\section*{Acknowledgments}

The work of A.~Iosevich was supported in part by the National Science Foundation under Grant DMS--2506858.


\begin{thebibliography}{99}

\bibitem{FRdiscrete}
K. Aldahleh, W. Burstein, G. Garza, A. Iosevich, J. Iosevich, A. Khalil, J. King,
T. Le, I. Li, A. Mayeli, K. Nguyen, and N. Shaffer, {\it The Fourier ratio and complexity of signals}. (arXiv:2511.19560), (2025).

\bibitem{GaborFrames}
I. Bortnovskyi, J. Duvivier, X. Huang, A. Iosevich, S.-Y. Kwon, M. Laurence,
M. Lucas, S. J. Miller, T. Pan, E. Palsson, J. Smucker, and I. Vranesko,
{\it Signal recovery using Gabor frames}, arXiv:2511.02661, (2025).

\bibitem{CRT05}
E. Cand\`es, J. Romberg, and T. Tao,
Stable signal recovery from incomplete and inaccurate measurements,
Comm. Pure Appl. Math. 59 (2006), 1207--1223.

\bibitem{DS89}
D. Donoho and P. Stark,
Uncertainty principles and signal recovery,
SIAM J. Appl. Math. 49 (1989), 906--931.

\bibitem{FoucartRauhut}
S. Foucart and H. Rauhut,
A Mathematical Introduction to Compressive Sensing,
Birkh\"auser, 2013.

\bibitem{HavivRegev}
I. Haviv and O. Regev,
The restricted isometry property of subsampled Fourier matrices,
arXiv:1507.01768, 2015.

\bibitem{Hoeffding63}
W. Hoeffding,
Probability inequalities for sums of bounded random variables,
J. Amer. Statist. Assoc. 58 (1963), 13--30.

\bibitem{LV19}
M. Li and P. Vitanyi, {\it An Introduction to Kolmogorov Complexity and Its Applications}, 4th Edition, Springer, (2019).

\bibitem{Tillman2014}
A. M. Tillmann and M. E. Pfetsch, The Computational Complexity of the Restricted Isometry Property, the Nullspace Property, and Related Concepts in Compressed Sensing, IEEE Transactions on Information Theory, vol. 60, no. 2, pp. 1248--1259, Feb. 2014.

\bibitem{VV10}
N. Vereshchagin and P. Vitányi. {\it Rate--Distortion Theory for Kolmogorov Complexity}, IEEE Transactions on Information Theory, 56(7):3438--3454, (2010).

\end{thebibliography}
\end{document}